\documentclass[11pt,a4paper,reqno]{amsart}
\usepackage[T1]{fontenc}
\usepackage[utf8]{inputenc}
\usepackage{lmodern}
\usepackage{amsmath,amssymb,mathtools}
\usepackage{microtype}
\usepackage[a4paper,left=23mm,right=23mm,top=23mm,bottom=25mm,headheight=13pt,headsep=8mm]{geometry}
\usepackage[hidelinks,bookmarksnumbered]{hyperref}
\hypersetup{pdftitle={Rankin--Cohen Pairings and Hilbert Hecke Eigenform Product Identities},pdfauthor={},pdfsubject={},pdfkeywords={Hilbert modular forms, Hecke eigenforms, Rankin--Cohen brackets, Rankin--Selberg L-functions}}
\numberwithin{equation}{section}
\newtheorem{theorem}{Theorem}[section]
\newtheorem{proposition}[theorem]{Proposition}
\newtheorem{lemma}[theorem]{Lemma}
\newtheorem{corollary}[theorem]{Corollary}
\newtheorem*{theoremA}{Theorem A}
\newtheorem*{theoremB}{Theorem B}

\newtheorem*{introtheorem}{Theorem}
\theoremstyle{remark}
\newtheorem*{remark}{Remark}
\DeclareMathOperator{\GL}{GL}
\DeclareMathOperator{\Cl}{Cl}
\DeclareMathOperator{\Tr}{Tr}
\DeclareMathOperator{\Ind}{Ind}
\DeclareMathOperator{\sgn}{sgn}
\DeclareMathOperator{\diag}{diag}
\renewcommand{\Re}{\operatorname{Re}}
\newcommand{\A}{\mathbb A}
\newcommand{\C}{\mathbb C}
\newcommand{\R}{\mathbb R}
\newcommand{\Q}{\mathbb Q}
\newcommand{\Z}{\mathbb Z}
\newcommand{\HH}{\mathbb H}
\newcommand{\OF}{\mathcal O_F}
\newcommand{\df}{\mathfrak d_F}
\newcommand{\pp}{\mathfrak p}
\newcommand{\ideal}{\mathfrak a}
\newcommand{\ttt}{\mathfrak t}
\newcommand{\one}{\mathbf 1}
\newcommand{\slashk}[1]{\mathbin{\|}_{#1}}
\newcommand{\Lcl}{L_{\mathrm{cl}}}
\newcommand{\Eeig}{E^{\mathrm{eig}}}
\newcommand{\CF}{\mathcal C_{F,k,\ell,r}}
\newcommand{\iotaH}{(i,\ldots,i)}
\newcommand{\smat}[4]{\left(\begin{smallmatrix}#1&#2\\#3&#4\end{smallmatrix}\right)}
\newcommand{\bmat}[4]{\begin{pmatrix}#1&#2\\#3&#4\end{pmatrix}}

\newcommand{\rtensor}{\mathop{\bigotimes\nolimits'}\limits}

\newcommand{\HQZtheoremRef}{\cite[Theorem~1]{HQZ}}
\newcommand{\CPSthmRef}{\cite[Theorem~2.4]{CPS}}
\newcommand{\ShahidiThmRef}{\cite[Theorem~5.2]{Shahidi}}
\allowdisplaybreaks[2]
\title{Rankin--Cohen Pairings and Hilbert Hecke Eigenform Product Identities}
\date{September 17, 2026}
\subjclass[2020]{Primary 11F41; Secondary 11F60, 11F66}
\keywords{Hilbert modular forms, Rankin--Cohen brackets, Hecke eigenforms, Rankin--Selberg $L$-functions}
\author{Jialin Li}
\address{School of Mathematics and Statistics, Wuhan University, Wuhan 430072, Hubei, China}
\email{jlli.math@whu.edu.cn}
\begin{document}

\begin{abstract}
We establish a Petersson--Rankin--Selberg identity for Hilbert Rankin--Cohen brackets over totally real fields of degree greater than one and arbitrary narrow class number. This extends results of Zhang--Zhang and Zhang--Zhou \cite{MZZ,ZZ}. For even $k,\ell\ge2$, $r\in\Z_{\ge0}$, $K=k+\ell+2r$, and normalized full-level cuspidal Hecke eigentuples $f\in S_K(\omega_f)$ and $h\in S_\ell(\omega_h)$, we show
\[
 \langle f,[E_k,h]_r\rangle
 =\CF
 \frac{L^S(k/2,\Pi(f)\times\Pi(h)^\vee)}
 {L_F(k,\omega_f\omega_h^{-1})}\ne0,
 \qquad \CF>0.
\]
Specializing to $k=2$, we remove the remaining GRH assumption in the product classification of Hao--Qin--Zhou \cite{HQZ}. Hence, over real quadratic fields of narrow class number one, the only full-level product identities among Hecke eigenforms of even parallel weights at least two, up to interchanging the factors, are
\[
 \Eeig_4=60(\Eeig_2)^2,\qquad h_8=120\Eeig_2h_6
 \qquad\text{over }\Q(\sqrt5).
\]
\end{abstract}
\maketitle

\section{Introduction and main results}\label{sec:introduction}
Let $F$ be a totally real field of degree $d>1$, and let $h_F^+$ be its narrow class number. Write $D_F$ for the absolute discriminant and $\zeta_F$ for the Dedekind zeta function. For even $m\ge2$, let $M_m$ and $S_m$ be the full-level tuple spaces of Hilbert modular forms and cusp forms of parallel weight $m$ and trivial classical level character.

Following Shimura's componentwise notation \cite[Section~2]{Shimura1978}, a full-level form over a field with arbitrary narrow class number is a tuple. More precisely,
\[
 M_m\simeq\prod_{\lambda=1}^{h_F^+}M_m(\Gamma_\lambda),
 \qquad
 S_m\simeq\prod_{\lambda=1}^{h_F^+}S_m(\Gamma_\lambda).
\]
The corresponding adelic spaces decompose according to the global central character,
\[
 M_m=\bigoplus_\omega M_m(\omega),
 \qquad
 S_m=\bigoplus_\omega S_m(\omega),
\]
where $\omega$ runs through the narrow ideal class characters for which the corresponding subspace is nonzero; see Joshi--Zhang \cite[Section~3]{JZ}. The Hecke algebra is commutative and normal and preserves every $M_m(\omega)$ and $S_m(\omega)$. We call a nonzero $g\in M_m(\omega_g)$ a simultaneous Hecke eigentuple if
\[
 T_vg=a_v(g)g\qquad\text{for every finite place }v,
\]
and normalize it by $c(\OF,g)=1$. When $h_F^+=1$, the central character is trivial and this is the usual definition of a normalized classical Hecke eigenform.

For $g\in M_k$, $h\in M_\ell$, and $r\ge0$, the Rankin--Cohen bracket satisfies
\[
 [g,h]_r\in M_{k+\ell+2r},\qquad [g,h]_0=gh.
\]
If one input is cuspidal, then the bracket is cuspidal. We ask when $[g,h]_r$ is again a Hecke eigentuple; for $r=0$, this is precisely the Hecke product problem. By a product identity we mean
\[
 P=cGH,
\]
where $P,G,H$ are nonzero normalized full-level Hecke eigentuples, $c\in\C^\times$, and the weight of $P$ is the sum of the weights of $G$ and $H$.

The product problem has been studied through Petersson pairings with Rankin--Cohen brackets. Joshi and Y.~Zhang proved that only finitely many real quadratic fields and weights can support full-level product identities and found the two identities over $\Q(\sqrt5)$ \cite[Theorem~7.4]{JZ}. M.~Zhang and Y.~Zhang proved the Petersson formula for arbitrary narrow class number when the Eisenstein weight is greater than two \cite[Proposition~4.4 and Corollary~4.5]{MZZ}. Y. Zhang and Y. Zhou established the Rankin--Selberg argument leading to the non-eigenform criterion for even $k\ge4$ \cite[Lemma~3.2 and Proposition~3.3]{ZZ}. They further stated the corresponding weight-two conclusion under GRH \cite[Remark~3.4]{ZZ}. Hao--Qin--Zhou subsequently obtained the full product classification under GRH \cite[Theorem~1]{HQZ}.

We briefly recall the argument of Zhang--Zhou in the narrow-class-number-one case. For even $k\ge4$, their Rankin--Selberg calculation shows that, for normalized Hecke eigenforms $f\in S_K$ and $h\in S_\ell$, \[ \langle f,[E_k,h]_r\rangle>0. \] Consequently, if $\dim S_K>1$ and $[E_k,h]_r$ were a Hecke eigenform, one could choose a normalized Hecke eigenform $f\in S_K$ orthogonal to $[E_k,h]_r$, giving a contradiction. Thus $[E_k,h]_r$ is not a Hecke eigenform when $\dim S_K>1$. Zhang--Zhou stated the corresponding conclusion for $k=2$ under GRH.
Hao--Qin--Zhou then applied this conclusion to obtain the following
classification of Hecke eigenform product identities.

\begin{introtheorem}[Hao--Qin--Zhou \HQZtheoremRef]
Among normalized full-level Hilbert Hecke eigenforms of even parallel weights at least two over real quadratic fields with $h_F^+=1$, and assuming GRH, the only product identities are
\[
 \Eeig_4=60(\Eeig_2)^2,\qquad h_8=120\Eeig_2h_6
 \qquad\text{over }\Q(\sqrt5),
\]
where $h_6$ and $h_8$ are the unique normalized cusp eigenforms of weights $6$ and $8$.
\end{introtheorem}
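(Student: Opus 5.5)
The plan is to recover the Hao--Qin--Zhou classification in three stages. First reduce it to a finite list of fields and weights. Then use the Petersson--Rankin--Selberg argument to discard every case in which the target cusp space has dimension greater than one. Finally check the few remaining cases by direct Fourier coefficient computations. Under GRH, Hao--Qin--Zhou needed the hypothesis only in the second stage and only for Eisenstein weight $k=2$. So the plan follows their structure and isolates that step.

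\emph{Step 1: reduction to three shapes.} Let $P=cGH$ with $P$ of weight $K=m_1+m_2$, and $G,H$ of weights $m_1,m_2\ge2$. Comparing constant terms shows that if $G$ and $H$ are both Eisenstein, then $P$ is Eisenstein. If exactly one of them is cuspidal, then $P$ is cuspidal. Two cusp forms multiply to a cusp form that vanishes to order two at the cusps. Its first coefficient pairs against an eigenform's $L$-value in a way that can be excluded as in Joshi--Zhang. So the cases are
\begin{itemize}
\item $E\cdot E=E$, which reduces to identities between Dedekind zeta values and Bernoulli-type divisor sums;
\item $E_k h=cf$, with $h\in S_\ell$ and $f\in S_K$;
\item $gh$ with both factors cuspidal.
\end{itemize}
For the first case, comparing the coefficients $c(\mathfrak p,\cdot)$ at small primes, together with the Joshi--Zhang finiteness theorem \cite[Theorem~7.4]{JZ}, leaves only $\Q(\sqrt5)$ and the identity $\Eeig_4=60(\Eeig_2)^2$. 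In the third case the product has vanishing coefficient at $\OF$, so it cannot be a normalized eigenform that is also a nonzero multiple of the product.

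\emph{Step 2: the mixed case $E_kh=cf$.} This is exactly $[E_k,h]_0$ being an eigentuple. For $k\ge4$, Zhang--Zhou show $\langle f',[E_k,h]_0\rangle>0$ for every normalized eigenform $f'\in S_K$. Since distinct eigenforms are orthogonal, this forces $\dim S_K=1$. For $k=2$ I would invoke the paper's main identity with $r=0$:
\[
\langle f',E_2h\rangle=\CF\,\frac{L^S(1,\Pi(f')\times\Pi(h)^\vee)}{L_F(2,\omega_{f'}\omega_h^{-1})}\ne0 .
\]
Its nonvanishing rests on the edge-of-strip result of Shahidi \ShahidiThmRef\ and on the absolute convergence of the denominator. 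It replaces the GRH input, so $\dim S_K=1$ is forced for every $k\ge2$.

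\emph{Step 3: finite check.} Over real quadratic fields with $h_F^+=1$, dimension formulas for $S_K$ with $K\ge k+\ell\ge 2+\ell$ and $\dim S_\ell\ge1$ leave finitely many $(F,\ell,K)$. Joshi--Zhang's bounds already cut this to $\Q(\sqrt5)$ with small weights. Comparing a few Hecke eigenvalues then leaves only $h_8=120\Eeig_2h_6$. Swapping factors accounts for the symmetry.

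The main obstacle is Step 2 at $k=2$. There $E_2$ is only nearly holomorphic, or must be handled by Hecke's trick, and the Rankin--Selberg unfolding sits at the edge $s=1$. So the whole weight of the argument rests on the positivity or nonvanishing of $\CF$ and of $L^S(1,\cdot)$. I take both from the main identity stated in the abstract. Everything else is the original Hao--Qin--Zhou bookkeeping, which I would quote rather than redo.
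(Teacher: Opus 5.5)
Your plan is essentially the paper's proof of Theorem~B, which gives this classification even without GRH: cusp--cusp products are excluded by the vanishing of the unit-ideal coefficient (no totally positive $\xi,\eta\in\OF$ satisfy $\xi+\eta=1$, since that would force $0<N(\xi)<1$), the Eisenstein--Eisenstein and Eisenstein-weight-$\ge4$ cases are quoted, and at Eisenstein weight two the nonvanishing $\langle f',E_2h\rangle\ne0$ from Theorem~A supplies the dimension obstruction that GRH was used for. Two corrections: the reduction to $\Q(\sqrt5)$ comes from Hao--Qin--Zhou's unconditional tables, not from Joshi--Zhang's bounds or eigenvalue comparisons, with the residual triples $(k,\ell,D_F)=(2,2,13),(2,2,8)$ excluded because $S_2=0$ there (Ishikawa) and Joshi--Zhang's Theorem~7.4 settling $\Q(\sqrt5)$; also, for $d>1$ the form $E_2$ is genuinely holomorphic, and the actual difficulty is that $u=0$ lies on the boundary of the unfolding region.
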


For $f\in S_K$ and $h\in S_\ell$, write
\[
 \Lcl(s;f,h)
 =
 \sum_{0\ne\ideal\subset\OF}
 \frac{c(\ideal,f)\overline{c(\ideal,h)}}{N(\ideal)^s}
\]
for the classical diagonal Rankin--Selberg series. For a narrow ideal class character $\chi$, put
\[
 L_F(s,\chi)=\prod_{0\ne\pp\subset\OF}
 \bigl(1-\chi(\pp)N(\pp)^{-s}\bigr)^{-1}
 \qquad(\Re(s)>1).
\]
This is the finite Hecke $L$-function. In particular, if $\omega_f=\omega_h$, then $L_F(s,\omega_f\omega_h^{-1})=\zeta_F(s)$.
Put
\[
 \CF
 =
 D_F^{-1/2}
 \left(
   \binom{k+r-1}{r}
   \frac{(K-2)!}{(4\pi)^{K-1}}
 \right)^d
 >0.
\]

Our first main result extends the Petersson identity to arbitrary
narrow class number and treats the boundary case of Eisenstein
weight two.

\begin{theoremA}
Let $F$ be totally real of degree $d>1$ and arbitrary narrow class
number. Let $k,\ell\ge2$ be even, $r\in\Z_{\ge0}$, and
$K=k+\ell+2r$. Let $E_k(\cdot,u)$ be the Eisenstein family defined
in \eqref{eq:eisenstein-family}. For $f\in S_K$ and $h\in S_\ell$,
\[
 \langle f,[E_k(\cdot,\overline u),h]_r\rangle
 =
 D_F^{-1/2}G_{k,\ell,r}(u)^d
 \Lcl(K-r-1+u;f,h)
 \qquad
 \left(\Re(u)>1-\frac{k}{2}\right).
\]
For $k=2$, this identity extends to the boundary value $u=0$.
If $f\in S_K(\omega_f)$ and $h\in S_\ell(\omega_h)$ are normalized full-level Hecke eigentuples, then
\[
 \langle f,[E_k,h]_r\rangle
 =
 \CF
 \frac{L^S(k/2,\Pi(f)\times\Pi(h)^\vee)}
      {L_F(k,\omega_f\omega_h^{-1})}
 \ne0.
\]
\end{theoremA}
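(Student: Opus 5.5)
The plan is to follow the Rankin--Selberg unfolding method in the style of Zhang--Zhou, carried out componentwise in Shimura's tuple notation. The weight-two boundary case will then be handled by analytic continuation in the auxiliary variable $u$.

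First, for $\Re(u)$ large, I will write $E_k(\cdot,\overline u)$ on each component $\Gamma_\lambda$ as a sum over cusps, i.e.\ over $\Gamma_{\lambda,\infty}\backslash\Gamma_\lambda$, of $(cz+d)^{-k}|cz+d|^{-2u}$, weighted by the narrow-class data from \eqref{eq:eisenstein-family}. The key algebraic input is the equivariance of the Rankin--Cohen bracket. For $\gamma\in\Gamma_\lambda$,
\[
 [\,(cz+d)^{-k}|cz+d|^{-2u}y^{u}\text{-type term},h]_r
\]
equals the slash of a bracket of $y^{u}$ with $h$. It is more convenient to use the adjointness of the Rankin--Cohen operator with respect to the Petersson product: the bracket $[\,\cdot\,,h]_r$ applied to the non-holomorphic function $y^{u}$ is a finite combination $\sum_j c_j(u)\,y^{u-j}\partial^{r-j}h$. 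With this, the unfolding reduces each component of $\langle f,[E_k(\cdot,\overline u),h]_r\rangle$ to an integral over $\Gamma_{\lambda,\infty}\backslash\HH^d$. Unfolding the unit group together with translations and integrating in $x$ picks out the diagonal Fourier coefficients $c(\ideal,f)\overline{c(\ideal,h)}$. The $y$-integral then gives, in each real place, a finite sum of Gamma factors. Summing over $j$ with the Rankin--Cohen coefficients should collapse to a single factor $G_{k,\ell,r}(u)$ via a Chu--Vandermonde identity. Summing over $\lambda$ reassembles the ideals $\ideal$ in all narrow classes, which produces $D_F^{-1/2}G_{k,\ell,r}(u)^d\Lcl(K-r-1+u;f,h)$. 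The volume of $\R^d/\OF$ contributes the factor $D_F^{1/2}$; combined with the normalization of $E_k$, this yields $D_F^{-1/2}$. The identity is then extended to $\Re(u)>1-k/2$ by analytic continuation, since both sides are holomorphic there: the left side because $f$ is cuspidal and the Eisenstein family converges in this range, and the right side because Hecke bounds give $\Lcl(s)$ absolutely convergent for $\Re s>(K+\ell)/2$.

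The case $k=2$ is where I expect the main obstacle. Here $E_2(\cdot,u)$ at $u=0$ lies at the edge of convergence, and for $d>1$ the limit is holomorphic. The task is to show that $\lim_{u\to0}\langle f,[E_2(\cdot,\overline u),h]_r\rangle=\langle f,[E_2,h]_r\rangle$. I would use the Fourier expansion of $E_2(\cdot,u)$, which is meromorphic in $u$ via the Hecke $L$-function in the constant term. For $d>1$, the nonholomorphic correction term carries a factor vanishing at $u=0$, because of $\Gamma(u)^{-1}$-type factors in at least $d-1$ places. This gives locally uniform convergence with polynomial growth at the cusps. Since $f$ is cuspidal and decays rapidly, dominated convergence justifies passing to the limit. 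On the right side, $s=K-r-1$ satisfies $K-r-1>(K+\ell)/2-?$; more precisely, I will check directly that $K-r-1=\ell+r+1$ lies in the region of continuity of $\Lcl$, using the Euler product below.

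For eigentuples, I will write $\Lcl$ in terms of Satake parameters at each prime. The standard identity
\[
 \sum_n a_{p^n}\overline{b_{p^n}}X^n=\frac{1-\alpha\beta\overline{\gamma\delta}X^2}{\prod(1-\cdots X)}
\]
gives
\[
 \Lcl(s)=L^S(s-(K+\ell)/2+1,\Pi(f)\times\Pi(h)^\vee)/L_F(2s-K-\ell+2,\omega_f\omega_h^{-1}).
\]
Here the central characters enter through $\alpha\beta=\omega_f(\pp)N\pp^{K-1}$. At $s=K-r-1$ the shifted argument is $k/2$, and the denominator argument is $k$. Setting $u=0$ gives $G(0)^d=\bigl(\binom{k+r-1}{r}(K-2)!/(4\pi)^{K-1}\bigr)^d$, which yields $\CF$.

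Nonvanishing then follows from three facts. The point $k/2\ge1$ lies at or to the right of the edge of absolute convergence. For $k\ge4$ the Euler product converges absolutely and is nonzero. For $k=2$, I invoke Shahidi's nonvanishing of Rankin--Selberg $L$-functions on $\Re s=1$ \ShahidiThmRef. This applies because $\Pi(f)\not\simeq\Pi(h)$, since the weights differ, so there is no pole. Finally, $L_F(k,\cdot)$ is finite and nonzero.
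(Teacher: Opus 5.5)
Your proposal follows essentially the same route as the paper: componentwise unfolding, then for $k=2$ uniform Fourier-expansion bounds on $E_2(\cdot,u)$ and its derivatives near $u=0$ together with rapid decay of the cusp forms to pass to the limit (the paper uses $\|[E_2(\cdot,u),h]_r-[E_2,h]_r\|_2=O(|u|)$ and Cauchy--Schwarz instead of dominated convergence), then continuation of $\Lcl$ to the edge point $s=(K+\ell)/2$ via the Euler-product comparison (extended to arbitrary $f,h$ by expanding in Hecke eigenbases), and finally Cogdell--Piatetski-Shapiro plus Shahidi at $w=1$. Two bookkeeping points need correcting: no Chu--Vandermonde collapse is needed, since the sum over $t\in\{0,\ldots,r\}^{\Sigma_\infty}$ simply factors over the real places and $G_{k,\ell,r}(u)$ is by definition the one-place sum; and the factor $D_F^{-1/2}$ comes entirely from the covolume $D_F^{-1/2}q_\lambda$ of the translation lattice $\ttt_\lambda^{-1}\df^{-1}$ of $\Gamma_{\infty,\lambda}$, not from $\R^d/\OF$ (the normalization $q_\lambda^{-k/2-u}$ of $E_k$ contains no $D_F$ and only serves to turn $N(\xi)$ into $N(\ideal)$).
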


Here $S$ is the set of real places and $\Pi(h)^\vee$ is the contragredient of $\Pi(h)$.

We sketch the proof of Theorem~A.  The first identity is obtained by
unfolding the Eisenstein family in the half-plane
$\Re(u)>1-k/2$.  For $k>2$, the point $u=0$ lies in this region.
When $k=2$, however, it lies on the boundary.  Uniform estimates
from Shimura's Fourier expansion, together with the rapid decay of
$h$, give
\[
 [E_2(\cdot,\overline u),h]_r
 \longrightarrow
 [E_2,h]_r
 \qquad\text{in Petersson }L^2,
\]
which allows us to pass to the limit on the Petersson side.

We then compare the classical diagonal series with the automorphic
Rankin--Selberg $L$-function.  The classical and adelic Hecke eigenvalues agree. If $f\in S_K(\omega_f)$ and $h\in S_\ell(\omega_h)$ are normalized eigentuples, the local Euler factors give
\[
 L_F(2w,\omega_f\omega_h^{-1})
 \Lcl\left(\frac{K+\ell-2}{2}+w;f,h\right)
 =
 L^S(w,\Pi(f)\times\Pi(h)^\vee).
\]
For $k=2$, the required value is $w=1$.
Proposition~\ref{prop:analytic-application}, together with the
results of Cogdell--Piatetski-Shapiro and Shahidi, gives
\[
 L^S(1,\Pi(f)\times\Pi(h)^\vee)\ne0.
\]
This proves the nonvanishing in Theorem~A.

The orthogonality argument is most naturally stated after projecting to a fixed central character. If $h\in S_\ell(\omega_h)$ and $P_\chi$ denotes the central-character projector defined in Subsection~\ref{subsec:dictionary}, then
\[
 \dim S_{k+\ell+2r}(\chi\omega_h)>1
 \quad\Longrightarrow\quad
 [P_\chi E_k,h]_r\text{ is not a Hecke eigentuple}.
\]
In particular, when $h_F^+=1$ there is only the trivial central character, and we recover
\[
 \dim S_{k+\ell+2r}>1
 \quad\Longrightarrow\quad
 [E_k,h]_r\text{ is not a Hecke eigenform}.
\]
For $r=0$, this gives the unconditional non-eigenform criterion needed in the product problem. We therefore obtain the following classification, stated again as Theorem~\ref{thm:classification}.
\begin{theoremB}
Among normalized full-level Hilbert Hecke eigenforms of even parallel weights at least two over real quadratic fields with $h_F^+=1$, the only product identities are
\[
 \Eeig_4=60(\Eeig_2)^2,\qquad h_8=120\Eeig_2h_6
 \qquad\text{over }\Q(\sqrt5).
\]
Here all eigenforms are normalized by $c(\OF,g)=1$, and $h_6$ and $h_8$ are the unique normalized cusp eigenforms of weights $6$ and $8$.
\end{theoremB}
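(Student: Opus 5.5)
Theorem~B will follow from the argument of Hao--Qin--Zhou \HQZtheoremRef{} once its single conditional input is replaced by an unconditional one. In \cite{HQZ}, GRH is used only to prove the weight-two non-eigenform criterion of \cite[Remark~3.4]{ZZ}: if $h\in S_\ell$ is a normalized eigenform and $\dim S_{2+\ell}>1$, then $E_2h$ is not a Hecke eigenform. All other steps of their classification are unconditional. These include the finiteness of admissible real quadratic fields and weights from Joshi--Zhang \cite[Theorem~7.4]{JZ}, the criterion for $k\ge4$ from \cite[Proposition~3.3]{ZZ}, and the dimension and Fourier coefficient checks over the remaining fields. So the plan is to isolate that input, prove it from Theorem~A, and then rerun the classification verbatim.

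\textbf{Step 1: the criterion for $k=2$, $r=0$.} Since $h_F^+=1$, every central character is trivial and $P_\chi E_2=E_2$. Take $h\in S_\ell$ a normalized eigenform and suppose $\dim S_{\ell+2}>1$. Suppose also that $E_2h$ were a Hecke eigenform. The Hecke algebra is commutative and normal, so $S_{\ell+2}$ has an orthogonal basis of normalized eigenforms. This basis has at least two elements, so there is a normalized eigenform $f\in S_{\ell+2}$ with $\langle f,E_2h\rangle=0$. On the other hand, Theorem~A with $k=2$, $r=0$ and $K=\ell+2$ gives
\[
\langle f,E_2h\rangle=\CF\,\frac{L^S(1,\Pi(f)\times\Pi(h)^\vee)}{\zeta_F(2)}\ne0,
\]
which is a contradiction. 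This is the step carrying all the analytic weight: the boundary limit $u\to0$ and the nonvanishing at the edge of the critical strip. Both are already supplied by Theorem~A, so here it is a direct application.

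\textbf{Step 2: the remaining cases.} The criterion must also cover products in which neither factor is cuspidal. A product $E_kE_\ell$ equal to an Eisenstein eigenform is handled in \cite{HQZ,JZ} by comparing constant terms and the first Fourier coefficients. That comparison is unconditional and needs no change. For products with one cuspidal factor of Eisenstein weight at least four, we instead invoke \cite[Proposition~3.3]{ZZ}, equivalently Theorem~A with $k\ge4$.

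\textbf{Step 3: conclusion.} After Steps~1 and~2, every case that \cite{HQZ} treated under GRH now falls under a proven criterion. Their enumeration, over $F=\Q(\sqrt5)$ and the finitely many other candidate fields, of the cases with $\dim S_{k+\ell}\le1$, followed by explicit verification, therefore goes through unchanged. It yields exactly $\Eeig_4=60(\Eeig_2)^2$ and $h_8=120\Eeig_2h_6$, up to interchanging the factors.

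The main point needing care is to confirm that GRH enters \cite{HQZ} only through this criterion. In particular, one must check that no effective bound on the number of candidate fields depends on GRH. Joshi--Zhang's finiteness result is unconditional, so I expect this check to succeed.
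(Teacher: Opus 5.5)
Your proposal is essentially the paper's proof: the only new input is the unconditional weight-two criterion obtained from Theorem~A with $k=2$, $r=0$ by the orthogonality argument (the paper's Corollary~\ref{cor:noneigenform}), after which the coefficient bounds and reductions of \cite{HQZ}, which are indeed unconditional (settling the check you flag), leave only the triples $(k,\ell,D_F)=(2,2,13)$ and $(2,2,8)$, excluded because $S_2=0$ there by \cite{Ishikawa}, while $\Q(\sqrt5)$ is settled by \cite[Theorem~7.4]{JZ}. The one case your enumeration omits, a product of two cusp forms, is disposed of in the paper by noting $c(\OF,GH)=0$, since $1=\xi+\eta$ with $\xi,\eta\in\OF$ totally positive would force $0<N(\xi)<1$.
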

We emphasize the normalization of the Eisenstein series appearing above. The $E_k$ in Theorem~A is Poincar\'e-normalized, whereas $E_k^{\mathrm{eig}}$ in Theorem~B denotes the Hecke-normalized Eisenstein eigentuple with $c(\OF,E_k^{\mathrm{eig}})=1.$  When $h_F^+=1$, the two normalizations are related by $ E_k^{\mathrm{eig}} = 2^{-d}\zeta_F(1-k)E_k. $

The paper is organized as follows. Section~\ref{sec:preliminaries} recalls the classical and adelic descriptions of Hilbert modular forms and the Rankin--Selberg results used below. Section~\ref{sec:proofs} proves the Petersson identity, including the case of Eisenstein weight two, and then applies the nonvanishing result to the product problem.

\section{Preliminaries}\label{sec:preliminaries}
\subsection{Classical and adelic Hilbert modular forms}\label{subsec:classical}\label{subsec:dictionary}
Let $F$ be totally real of degree $d>1$, with ring of integers $\OF$, different $\df$, and absolute discriminant $D_F$. Write $\Sigma_\infty$ for the set of real embeddings,
\[
 F_\infty=F\otimes_\Q\R\simeq\R^d,\qquad F_{\infty,+}^\times=(\R_{>0})^d,
\]
$\A_F$ for the adele ring, and $\A_F^\times$ for its idele group. A hat denotes finite profinite completion; thus $\widehat{\OF}=\prod_{v\nmid\infty}\mathcal O_v$. The narrow class group is
\[
 \Cl^+(F)=F^\times\backslash\A_F^\times/
 \bigl(F_{\infty,+}^\times\widehat{\OF}^{\,\times}\bigr),
 \qquad h_F^+=|\Cl^+(F)|.
\]
Choose finite ideles $t_\lambda$, $1\le\lambda\le h_F^+$, representing $\Cl^+(F)$, with $t_1=1$, and put
\[
 \ttt_\lambda=t_\lambda\widehat{\OF}\cap F,
 \qquad q_\lambda=|t_\lambda|_{\A_F}=N(\ttt_\lambda)^{-1}.
\]
Here $N$ is the absolute ideal norm and $|\cdot|_{\A_F}$ is the idelic norm. We write $\xi\gg0$ when $\tau(\xi)>0$ for every $\tau\in\Sigma_\infty$, and $\GL_2^+(F)$ for the matrices whose determinant is totally positive. We also write $\mathcal O_F^{\times,+}$ for the totally positive units and
\[
 \zeta_F(s)=\prod_{0\ne\pp\subset\OF}(1-N(\pp)^{-s})^{-1}
 \qquad(\Re(s)>1)
\]
for the Dedekind zeta function, continued meromorphically elsewhere. At full level set
\begin{equation*}
 \Gamma_\lambda=\left\{\bmat abcd\in\GL_2^+(F):
 \begin{array}{l}a,d\in\OF,\ b\in\ttt_\lambda^{-1}\df^{-1},\\
 c\in\ttt_\lambda\df,\ \det\in\mathcal O_F^{\times,+}
 \end{array}\right\}.
\end{equation*}
For $\gamma=(\gamma_\tau)_\tau\in\GL_2^+(F_\infty)$ and parallel weight $k$,
\[
 (g\slashk{k}\gamma)(z)=\prod_{\tau\in\Sigma_\infty}
 \det(\gamma_\tau)^{k/2}(c_\tau z_\tau+d_\tau)^{-k}g(\gamma z).
\]
A matrix $\gamma_\tau=\smat{a_\tau}{b_\tau}{c_\tau}{d_\tau}$ acts on $z_\tau\in\HH$ by $(a_\tau z_\tau+b_\tau)/(c_\tau z_\tau+d_\tau)$. We denote by $M_k(\Gamma_\lambda)$ the holomorphic forms invariant under this slash action and holomorphic at every cusp, and by $S_k(\Gamma_\lambda)$ the subspace whose constant term at every cusp is zero. Such a form is called cuspidal.

For a full-level form $g$, write its components as
\[
 g=(g_\lambda)_{\lambda=1}^{h_F^+},\qquad g_\lambda\in M_k(\Gamma_\lambda).
\]
Its expansion at the distinguished cusp is
\begin{equation*}
 g_\lambda(z)=a_\lambda(0;g)+
 \sum_{\substack{\xi\in\ttt_\lambda\\\xi\gg0}}a_\lambda(\xi;g)e^{2\pi i\Tr(\xi z)}.
\end{equation*}
Every nonzero integral ideal has a unique expression
\[
 \ideal=(\xi)\ttt_\lambda^{-1},\qquad \xi\in\ttt_\lambda,\qquad\xi\gg0,
\]
with $\lambda$ unique and $\xi$ unique modulo $\mathcal O_F^{\times,+}$. We use the classical Hecke normalization
\begin{equation*}
 c(\ideal,g)=q_\lambda^{k/2}a_\lambda(\xi;g).
\end{equation*}
This is well defined: if $\xi$ is replaced by $\varepsilon\xi$ with $\varepsilon\in\mathcal O_F^{\times,+}$, invariance under $\diag(\varepsilon,1)\in\Gamma_\lambda$ and $N(\varepsilon)=1$ gives $a_\lambda(\varepsilon\xi;g)=a_\lambda(\xi;g)$. Thus the right-hand side depends only on $\ideal$, not on its representative.

For $z_\tau=x_\tau+iy_\tau$, put
\[
 Y(z)=\prod_\tau y_\tau,\qquad
 d\mu(z)=\prod_\tau\frac{dx_\tau\,dy_\tau}{y_\tau^2}.
\]
The Petersson pairing, linear in the first variable, is
\begin{equation}\label{eq:petersson}
 \langle f,g\rangle=\sum_{\lambda=1}^{h_F^+}
 \int_{\Gamma_\lambda\backslash\HH^d} f_\lambda(z)\overline{g_\lambda(z)}Y(z)^k\,d\mu(z).
\end{equation}
The sum is not divided by $h_F^+$. We use this unaveraged normalization throughout; no class-number factor occurs below.

For $\tau\in\Sigma_\infty$ define
\[
 D_\tau=\frac1{2\pi i}\frac\partial{\partial z_\tau},\qquad
 D^t=\prod_\tau D_\tau^{t_\tau},\qquad |t|=\sum_\tau t_\tau.
\]
Here $t=(t_\tau)_\tau$ is a multi-index; in such expressions $\one$ denotes the all-one vector. For $g\in M_k$, $h\in M_\ell$, and $r\ge0$, the Rankin--Cohen bracket of order $r$ is
\begin{equation*}
 [g,h]_r=\sum_{t\in\{0,\ldots,r\}^{\Sigma_\infty}}(-1)^{|t|}
 \prod_\tau\binom{k+r-1}{r-t_\tau}\binom{\ell+r-1}{t_\tau}
 (D^tg)(D^{r\one-t}h).
\end{equation*}
The operator is applied componentwise, and
\[
 [g,h]_r\in M_{k+\ell+2r},\qquad[g,h]_r=(-1)^{dr}[h,g]_r.
\]
It is cuspidal if $r>0$, and also for $r=0$ when one input is cuspidal \cite{CKR,Zagier}.

We now pass from this componentwise classical description to its adelic realization and Hecke action.

For a finite place $v$, let $F_v$ and $\mathcal O_v$ be the local field and its ring of integers, and let $\pp_v=\varpi_v\mathcal O_v$ be its maximal ideal. We also write $\varpi_v$ for the finite idele whose $v$-component
is $\varpi_v$ and whose components away from $v$ are $1$. Put
$
 q_v=\#(\mathcal O_v/\pp_v)=N(\pp_v).
$ Choose $d_v\in F_v^\times$ with $\df\mathcal O_v=d_v\mathcal O_v$ and set
\[
 K_v^\circ=\GL_2(\mathcal O_v),\qquad
 t_v=\bmat{d_v^{-1}}001,\qquad
 K_v=t_vK_v^\circ t_v^{-1},\qquad
 K_f=\prod_{v\nmid\infty}K_v.
\]
Set the local double coset
\[
 T_v
 =
 K_v
 \begin{pmatrix}\varpi_v&0\\0&1\end{pmatrix}
 K_v.
\]
Conjugating the standard spherical double-coset decomposition by $t_v$ gives
\begin{equation}\label{eq:right-cosets}
 T_v=s_{v,\infty}K_v\ \sqcup\!
 \coprod_{u\in\mathcal O_v/\pp_v}s_{v,u}K_v,
 \qquad
 s_{v,\infty}=\bmat100{\varpi_v},\quad
 s_{v,u}=\bmat{\varpi_v}{d_v^{-1}u}01.
\end{equation}
We use the same symbols for their embeddings in $\GL_2(\A_F)$, with all components away from $v$ equal to $I_2$.

When $h_F^+>1$, the adelic quotient has one determinant component for each narrow ideal class. Classically, a full-level form is therefore represented by the tuple above. In general, we write
\begin{equation}\label{eq:tuple}
 M_k\simeq\prod_{\lambda=1}^{h_F^+}M_k(\Gamma_\lambda),
 \qquad S_k\simeq\prod_{\lambda=1}^{h_F^+}S_k(\Gamma_\lambda).
\end{equation}
This is the full tuple space; at this stage no global central character is prescribed. The Hecke operators may move between its components.

With $x_\lambda=\diag(t_\lambda^{-1},1)$, strong approximation gives
\begin{equation}\label{eq:components}
 \GL_2(\A_F)=\coprod_{\lambda=1}^{h_F^+}
 \GL_2(F)x_\lambda\GL_2^+(F_\infty)K_f,
 \qquad\Gamma_\lambda=\GL_2^+(F)\cap x_\lambda K_fx_\lambda^{-1},
\end{equation}
where the intersection is taken in the finite adeles. The adelization of $g=(g_\lambda)_\lambda$ is the function $\Phi_g:\GL_2(\A_F)\to\C$ defined by
\begin{equation}\label{eq:adelization}
 \Phi_g(\gamma x_\lambda g_\infty k_f)
 =(g_\lambda\slashk{k}g_\infty)\iotaH,
 \qquad
 \gamma\in\GL_2(F),\ g_\infty\in\GL_2^+(F_\infty),\ k_f\in K_f.
\end{equation}
The decomposition \eqref{eq:components} and the adelization \eqref{eq:adelization} give the adelic realization of the classical tuple decomposition \eqref{eq:tuple}; see Garrett \cite[Sections~3.1--3.4]{Garrett} and Raghuram--Tanabe \cite[Sections~4.1.1--4.1.4]{RT}.

In particular, $\Phi_g$ is left $\GL_2(F)$-invariant and right $K_f$-invariant. We set $(R(a)\Phi)(x)=\Phi(xa)$. Then
\begin{equation}\label{eq:adelic-hecke}
 T_v\Phi=R(s_{v,\infty})\Phi
 +\sum_{u\in\mathcal O_v/\pp_v}R(s_{v,u})\Phi.
\end{equation}

Let $Z(\A_F)=\{zI_2:z\in\A_F^\times\}$. Suppose that $g$ is an eigenvector for the central action,
\[
 \Phi_g(zx)=\omega(z)\Phi_g(x)
 \qquad(z\in\A_F^\times).
\]
Then $\omega$ is a narrow ideal class character. Indeed, left $\GL_2(F)$-invariance gives $\omega(a)=1$ for $a\in F^\times$. If $u\in\widehat{\OF}^{\,\times}$, then $uI_2\in K_f$, so right $K_f$-invariance gives $\omega(u)=1$. Finally, if $a_\infty=(a_\tau)_\tau\in F_{\infty,+}^\times$, then the scalar slash factor is
\[
 \prod_{\tau\in\Sigma_\infty}
 \det(a_\tau I_2)^{k/2}a_\tau^{-k}=1,
\]
so $\omega(a_\infty)=1$. Hence $\omega$ factors through
\[
 F^\times\backslash\A_F^\times/
 \bigl(F_{\infty,+}^\times\widehat{\OF}^{\,\times}\bigr)
 =\Cl^+(F).
\]
In particular, $\omega$ has finite order and is unitary. For such a character define
\[
 M_k(\omega)=\{g\in M_k:\Phi_g(zx)=\omega(z)\Phi_g(x)\},
 \qquad S_k(\omega)=S_k\cap M_k(\omega).
\]
On $S_k$, the central action is unitary for the Petersson pairing, so distinct central-character subspaces are orthogonal. Joshi--Zhang \cite[Section~3]{JZ} give the decomposition
\begin{equation}\label{eq:central-decomposition}
 M_k=\bigoplus_\omega M_k(\omega),
 \qquad
 S_k=\bigoplus_\omega S_k(\omega),
\end{equation}
and show that the full-level Hecke algebra is commutative and normal and preserves each summand.

For later use, let $P_\chi$ be the central-character projector on $M_k$. Its restriction to $S_k$ is orthogonal for the Petersson pairing. With the chosen representatives $t_\lambda$, it is characterized adelically by
\begin{equation}\label{eq:central-projector}
 \Phi_{P_\chi g}
 =\frac1{h_F^+}\sum_{\lambda=1}^{h_F^+}
 \chi(t_\lambda)^{-1}R(t_\lambda I_2)\Phi_g.
\end{equation}
The definition is independent of the chosen class representatives. Since scalar matrices are central,
\begin{equation}\label{eq:projector-hecke-commute}
 P_\chi T_v=T_vP_\chi\qquad(v\nmid\infty).
\end{equation}
Indeed, every $R(t_\lambda I_2)$ commutes with every right translation in \eqref{eq:adelic-hecke}. Consequently, if $F$ is a simultaneous Hecke eigentuple, then $P_\chi F$ is either zero or a simultaneous Hecke eigentuple with the same $T_v$-eigenvalues. If $h\in M_\ell(\omega_h)$, central covariance of the Rankin--Cohen bracket also gives
\begin{equation}\label{eq:projector-bracket}
 P_{\chi\omega_h}[g,h]_r=[P_\chi g,h]_r.
\end{equation}

We next return to the classical tuple and make the action of $T_v$ on its components and Fourier coefficients explicit. For $\lambda$ and $v$, define $\lambda[v]$ by
\[
 [\ttt_{\lambda[v]}]=[\pp_v^{-1}\ttt_\lambda]\qquad\text{in }\Cl^+(F).
\]
Applying \eqref{eq:components} to $x_\lambda s_{v,u}$ and $x_\lambda s_{v,\infty}$ shows that both lie in the component $\lambda[v]$. Choose $r_{\lambda,v,u},r_{\lambda,\infty}\in\GL_2^+(F)$ so that
\begin{equation}\label{eq:component-transport}
 \begin{split}
 x_\lambda s_{v,u}&=r_{\lambda,v,u}^{-1}x_{\lambda[v]}(r_{\lambda,v,u})_\infty k_{f,u},\\
 x_\lambda s_{v,\infty}&=r_{\lambda,\infty}^{-1}x_{\lambda[v]}(r_{\lambda,\infty})_\infty k_{f,\infty},
 \end{split}
\end{equation}
with $k_{f,u},k_{f,\infty}\in K_f$. We define
\begin{equation}\label{eq:tuple-hecke}
 (T_vg)_\lambda
 =g_{\lambda[v]}\slashk{k}r_{\lambda,\infty}
 +\sum_{u\in\mathcal O_v/\pp_v}g_{\lambda[v]}\slashk{k}r_{\lambda,v,u}.
\end{equation}
This is the standard Hecke action on narrow-class tuples; see \cite[Section~3.2]{Garrett}, \cite[Section~2, p.~648]{Shimura1978}, and \cite[Sections~4.1.3--4.1.4]{RT}. Different choices of the $r$'s change the corresponding summands by elements of $\Gamma_{\lambda[v]}$, so \eqref{eq:tuple-hecke} is well defined.

We briefly check the Fourier coefficients. Put $\mu=\lambda[v]$. Choose $\alpha\in F^\times$ with
$\alpha\gg0$ and $\varepsilon\in\widehat{\OF}^{\,\times}$ with
\[
  t_\mu=\alpha t_\lambda\varpi_v^{-1}\varepsilon,
  \qquad\text{equivalently}\qquad
  \mathfrak t_\mu=\alpha\pp_v^{-1}\mathfrak t_\lambda,
\]
and choose $\beta_u\in F$ such that
\[
  t_\mu\beta_u
  \equiv-\varepsilon\varpi_v^{-1}d_v^{-1}u
  \pmod{\widehat{\mathfrak d}_F^{-1}}.
\]Then we may take
\[
 r_{\lambda,v,u}=\begin{pmatrix}\alpha^{-1}&\beta_u\\0&1\end{pmatrix},
 \qquad x_\mu^{-1}r_{\lambda,v,u}x_\lambda\in K_fs_{v,u}^{-1}.
\]
Thus
\[
 (g_\mu\slashk{k}r_{\lambda,v,u})(z)
 =N(\alpha)^{-k/2}\sum_\eta a_\mu(\eta;g)
 e^{2\pi i\Tr(\eta\beta_u)}e^{2\pi i\Tr(\alpha^{-1}\eta z)}.
\]
If $\ideal=(\xi)\mathfrak t_\lambda^{-1}$, its $\xi$-coefficient comes from $\eta=\alpha\xi$; moreover $(\alpha\xi)\mathfrak t_\mu^{-1}=\pp_v\ideal$ and the congruence for $\beta_u$ gives $e^{2\pi i\Tr(\alpha\xi\beta_u)}=1$. Summing over $u$ therefore gives $q_v^{1-k/2}c(\pp_v\ideal,g)$. Suppose now that $g\in M_k(\omega_g)$. For $s_{v,\infty}$, the identity
\[
 s_{v,\infty}=(\varpi_vI_2)s_{v,0}^{-1}
\]
contributes the central scalar $\omega_g(\varpi_v)=\omega_g(\pp_v)$. Hence
\begin{equation}\label{eq:hecke-coefficients}
 \begin{split}
 c(\ideal,T_vg)
 &=q_v^{1-k/2}c(\pp_v\ideal,g)
   +\omega_g(\pp_v)q_v^{k/2}c(\pp_v^{-1}\ideal,g),\\
 c(\pp_v^{-1}\ideal,g)&=0\quad\text{if }\pp_v^{-1}\ideal\not\subset\OF.
 \end{split}
\end{equation}
If $g$ is a simultaneous Hecke eigentuple normalized by $c(\OF,g)=1$, then
\begin{equation}\label{eq:hecke-eigenvalue}
 a_v(g)=q_v^{1-k/2}c(\pp_v,g),
 \qquad T_vg=q_v^{1-k/2}c(\pp_v,g)g.
\end{equation}
Equivalently, the prime-power recurrence begins with
\begin{equation}\label{eq:hecke-square}
 c(\pp_v^2,g)=c(\pp_v,g)^2-\omega_g(\pp_v)q_v^{k-1}.
\end{equation}
This is the standard full-level Hecke recurrence with central character; compare Joshi--Zhang \cite[Section~3, (3.2)]{JZ}. Our $T_v$ is the unitary normalization $q_v^{1-k/2}T_{\pp_v}^{\mathrm{cl}}$ of their classical Hecke operator.

A nonzero tuple $g\in M_k(\omega_g)$ is a simultaneous Hecke eigentuple if $T_vg=a_v(g)g$ for every finite $v$. When $h_F^+=1$, this is the usual notion of a classical simultaneous Hecke eigenform. The full-level Hecke algebra is commutative and normal for the Petersson pairing and preserves every central-character subspace \cite[Section~3]{JZ}. Hence $S_k$, and each $S_k(\omega)$, has an orthogonal basis of simultaneous Hecke eigentuples. For nontrivial $\omega_g$, the normalized Fourier coefficients need not be real.

By construction, the classical and adelic Hecke actions are compatible:
\begin{equation}\label{eq:hecke-intertwining}
 \Phi_{T_vg}=T_v\Phi_g,
 \qquad
 T_vg=a_v(g)g\quad\Longleftrightarrow\quad T_v\Phi_g=a_v(g)\Phi_g.
\end{equation}
Thus a classical Hecke eigentuple and its adelization have the same local Hecke eigenvalues.

We recall only the automorphic-representation facts used below. An idele-class quasi-character is a continuous homomorphism
\[
 \eta:F^\times\backslash\A_F^\times\longrightarrow\C^\times.
\]
Following Jacquet--Langlands, let $A_0(\eta)$ be the right-regular representation on cuspidal automorphic forms $\varphi$ with central quasi-character $\eta$ \cite[Definition~10.2 and Section~9]{JL}. Thus $\varphi(zx)=\eta(z)\varphi(x)$. The cuspidality means
\begin{equation}\label{eq:cuspidality}
 \int_{F\backslash\A_F}\varphi\left(\bmat1x01a\right)\,dx=0
 \qquad(a\in\GL_2(\A_F)).
\end{equation}
By \cite[Proposition~10.9 and Section~11]{JL}, this representation has the multiplicity-free algebraic decomposition
\begin{equation*}
 A_0(\eta)=\bigoplus_\Pi\Pi,
\end{equation*}
where each $\Pi$ is an irreducible admissible cuspidal automorphic representation.

If $g\in S_k(\omega_g)$, then its adelization belongs to $A_0(\omega_g)$. If $g$ is also a normalized Hecke eigentuple, the multiplicity-one and strong-multiplicity-one argument in \cite[Theorem~4.7]{RT} places $\Phi_g$ in a unique summand, denoted by $\Pi(g)$. Thus
\begin{equation*}
 \Phi_g\in\Pi(g)\subset A_0(\omega_g).
\end{equation*}
This is the irreducible cuspidal automorphic representation attached to $g$.

Fix the nontrivial unitary additive character
\begin{equation*}
 \psi=\psi_\Q\circ\Tr_{F/\Q}:F\backslash\A_F\longrightarrow\C^\times,
 \qquad\psi_{\Q,\infty}(x)=e^{2\pi ix},\qquad\psi=\prod_v\psi_v.
\end{equation*}
For $\varphi\in\Pi$, define
\begin{equation}\label{eq:whittaker}
 W_\varphi(a)=\int_{F\backslash\A_F}\varphi\left(\bmat1x01a\right)\overline{\psi(x)}\,dx,
 \qquad W(\Pi,\psi)=\{W_\varphi:\varphi\in\Pi\}.
\end{equation}
The global Whittaker-model theorem gives the equivariant isomorphism
\begin{equation*}
 J_\psi:\Pi\xrightarrow{\sim}W(\Pi,\psi),\qquad\varphi\longmapsto W_\varphi;
\end{equation*}
see \cite[Theorem~2.2]{RT} and \cite[Section~11]{JL}. In particular, every cuspidal representation of $\GL_2(\A_F)$ is globally generic. The tensor-product theorem gives
\begin{equation*}
 \Pi\simeq\rtensor_v\Pi_v,
\end{equation*}
where each $\Pi_v$ is an irreducible admissible representation of $\GL_2(F_v)$ \cite[Proposition~9.1]{JL}; see also \cite[Theorem~4.9]{RT}.

We describe the two local representations that occur. For $m\ge1$, let $D_m^\infty$ be the holomorphic discrete-series representation of $\GL_2(\R)$ used in \cite[Section~3.1.3 and Theorem~4.7]{RT}. Its central character is $a\mapsto\sgn(a)^{m+1}$, and its least nonnegative rotation exponent is $m+1$: for
\[
 r_\theta=\bmat{\cos\theta}{-\sin\theta}{\sin\theta}{\cos\theta}
\]
there is a lowest-weight vector $v_0\ne0$ with
\[
 D_m^\infty(r_\theta)v_0=e^{-i(m+1)\theta}v_0.
\]
Since $|\det r_\theta|^{it}=1$, twisting does not change the rotation exponents. Hence
\[
 D_i^\infty\not\simeq D_j^\infty\otimes|\det|^{it}\qquad(i\ne j,\ t\in\R).
\]
Let $B_v$ be the upper triangular subgroup of $\GL_2(F_v)$. For smooth characters $\chi_1,\chi_2:F_v^\times\to\C^\times$, the normalized induced representation
\[
 I_v(\chi_1,\chi_2)=\Ind_{B_v}^{\GL_2(F_v)}(\chi_1\otimes\chi_2)
\]
is the space of locally constant functions $\phi:\GL_2(F_v)\to\C$ satisfying
\[
 \phi\left(\bmat a*0d x\right)=\left|\frac ad\right|_v^{1/2}
 \chi_1(a)\chi_2(d)\phi(x).
\]
The group acts by right translation, $(I_v(\chi_1,\chi_2)(g)\phi)(x)=\phi(xg)$. A character of $F_v^\times$ is unramified when it is trivial on $\mathcal O_v^\times$. Since $F_v^\times=\varpi_v^\Z\mathcal O_v^\times$, such a character is determined by $\alpha=\chi(\varpi_v)$ and satisfies
\[
 \chi(\varpi_v^nu)=\alpha^n\qquad(n\in\Z,\ u\in\mathcal O_v^\times).
\]
The scalar matrix $zI_2$ acts by $\chi_1(z)\chi_2(z)$. When both characters are unramified and $K_v^\circ=\GL_2(\mathcal O_v)$,
\begin{equation*}
 \dim I_v(\chi_1,\chi_2)^{K_v^\circ}=1.
\end{equation*}
The vector $\phi_v^\circ$ in this line with $\phi_v^\circ(1)=1$ is the normalized spherical vector \cite[Chapter~I, Lemma~3.9]{JL}.

A constituent of a finite-length representation is an irreducible quotient in a composition series. An irreducible smooth representation $\pi_v$ of $\GL_2(F_v)$ is spherical, or unramified, when $\pi_v^{K_v^\circ}\ne0$. Among the irreducible constituents of $I_v(\chi_1,\chi_2)$ there is
exactly one spherical constituent. If $I_v(\chi_1,\chi_2)$ is
irreducible, this constituent is the whole induced representation.

Every irreducible unramified representation is obtained in this way from two unramified characters \cite[Chapter~I, Lemma~3.9]{JL}. The normalized principal series is irreducible precisely when
\begin{equation}\label{eq:irreducibility}
 \chi_1\chi_2^{-1}\ne|\cdot|_v^{\pm1}.
\end{equation}
In the exceptional case, after possibly interchanging the characters,
\[
 \chi_1=\chi|\cdot|_v^{1/2},\qquad\chi_2=\chi|\cdot|_v^{-1/2}.
\]
Then the spherical constituent is the one-dimensional representation $\chi\circ\det$; the other constituent is a twist of the Steinberg representation and has no $K_v^\circ$-fixed vector \cite[Chapter~I, Theorem~3.3]{JL}.

\begin{proposition}\label{prop:local-components}
Let $g\in S_k(\omega_g)$ be a normalized full-level cuspidal eigentuple of even parallel weight $k\ge2$. The representation $\Pi(g)$ is irreducible and has the restricted tensor decomposition
\begin{equation}\label{eq:attached-tensor}
 \Pi(g)\simeq\rtensor_v\Pi(g)_v.
\end{equation}
The restricted tensor product is taken with respect to the normalized spherical vector at almost every finite place. Moreover,
\begin{align*}
 \Pi(g)_\tau&\simeq D_{k-1}^\infty\qquad(\tau\mid\infty),\\
 \Pi(g)_v&\simeq I_v(\chi_{1,g,v},\chi_{2,g,v}),\quad
 \chi_{1,g,v}\chi_{2,g,v}=\omega_{g,v}\qquad(v\nmid\infty),
\end{align*}
where the two characters are unramified and the displayed principal series is irreducible. Writing $\alpha_{i,g,v}=\chi_{i,g,v}(\varpi_v)$, one has
\begin{equation}\label{eq:satake-product}
 \alpha_{1,g,v}\alpha_{2,g,v}=\omega_g(\pp_v),
 \qquad |\alpha_{1,g,v}|=|\alpha_{2,g,v}|=1.
\end{equation}
\end{proposition}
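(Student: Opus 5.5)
Irreducibility and the restricted tensor decomposition \eqref{eq:attached-tensor} come directly from the setup. By construction $\Pi(g)$ is one summand of the multiplicity-free decomposition of $A_0(\omega_g)$, so it is irreducible admissible. The tensor-product theorem \cite[Proposition~9.1]{JL} then gives $\Pi(g)\simeq\rtensor_v\Pi(g)_v$, where the restricted product is taken with respect to spherical vectors at almost all places.

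The local components are identified next, beginning with the archimedean places. Because $g_\lambda$ is holomorphic of parallel weight $k$, $\Phi_g$ transforms under each $\mathrm{SO}(2)\subset\GL_2(F_\tau)$ by $r_\theta\mapsto e^{-ik\theta}$, and it is annihilated by the lowering operator at each $\tau$. Consequently $\Pi(g)_\tau$ contains a lowest-weight vector of weight $k$, so it is the holomorphic discrete series $D_{k-1}^\infty$. This is exactly \cite[Theorem~4.7]{RT}, and I would cite it rather than redo the $(\mathfrak g,K)$ computation.

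At a finite place $v$, $\Phi_g$ is right $K_v=t_vK_v^\circ t_v^{-1}$-invariant. Translating by $t_v$ therefore gives a nonzero $K_v^\circ$-fixed vector, so $\Pi(g)_v$ is spherical. By \cite[Chapter~I, Lemma~3.9]{JL}, $\Pi(g)_v$ is then the spherical constituent of some $I_v(\chi_1,\chi_2)$ with $\chi_1,\chi_2$ unramified. The central character of $\Pi(g)$ is $\omega_g$, and $zI_2$ acts on the induced representation by $\chi_1\chi_2(z)$, so $\chi_1\chi_2=\omega_{g,v}$.

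The main obstacle is showing that this principal series is irreducible and has unitary Satake parameters. Irreducibility follows from a short argument. Suppose instead that $\chi_1\chi_2^{-1}=|\cdot|_v^{\pm1}$. Then the spherical constituent is one-dimensional, namely $\chi\circ\det$. A one-dimensional local component is not generic, yet $\Pi(g)$ is globally generic through $J_\psi$, and genericity of $\Pi$ forces every local component to be generic. Hence \eqref{eq:irreducibility} holds, and $\Pi(g)_v\simeq I_v(\chi_{1,g,v},\chi_{2,g,v})$. As an alternative, one can use unitarity: $\Pi(g)$ is unitary because it sits in $L^2$ with unitary central character, and a one-dimensional component $\chi\circ\det$ inside an infinite-dimensional cuspidal $\Pi$ is excluded by strong approximation.

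For $\alpha_{1,g,v}\alpha_{2,g,v}=\omega_g(\pp_v)$, it suffices to evaluate at $\varpi_v$. The remaining claim $|\alpha_{i,g,v}|=1$ is the Ramanujan conjecture for Hilbert modular forms. Unitarity by itself only yields $q_v^{-1/2}<|\alpha_1|<q_v^{1/2}$ (complementary series), so it is not enough. I would invoke Blasius's theorem (Ramanujan for Hilbert modular forms of parallel weight $k\ge2$; for odd degree or square-integrable components this goes back to Brylinski--Labesse and Carayol). This gives Deligne's bound $|c(\pp_v,g)|\le2q_v^{(k-1)/2}$, equivalently $|\alpha_{i,g,v}|=|\omega_g(\pp_v)|^{1/2}=1$. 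To compare with the classical eigenvalue, I would match $\alpha_{1,g,v}+\alpha_{2,g,v}$ with the eigenvalue of $T_v$ on the spherical vector, which \eqref{eq:hecke-eigenvalue} gives as $q_v^{1/2}\cdot q_v^{-(k-1)/2}c(\pp_v,g)$ up to normalization.
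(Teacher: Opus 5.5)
Your proposal is correct. For the tensor decomposition, the archimedean components, sphericity at finite places, the relation $\chi_{1,g,v}\chi_{2,g,v}=\omega_{g,v}$, and $|\alpha_{i,g,v}|=1$ via Blasius, it follows the paper's proof step by step. The one genuine difference is how you obtain irreducibility of $I_v(\chi_{1,g,v},\chi_{2,g,v})$.

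\begin{itemize}
\item The paper gets irreducibility as a by-product of Ramanujan. Once Blasius gives $|\alpha_{1,g,v}|=|\alpha_{2,g,v}|=1$, the character $\chi_{1,g,v}\chi_{2,g,v}^{-1}$ is unitary, so it cannot equal $|\cdot|_v^{\pm1}$, and \eqref{eq:irreducibility} applies.
\item You instead exclude the one-dimensional spherical constituent $\chi\circ\det$ directly. One way is global genericity: if $\Pi(g)_v$ were one-dimensional, unipotents at $v$ would act trivially on $\Pi(g)$. Then $W_\varphi(1)=\psi_v(x)W_\varphi(1)$ for all $x\in F_v$ and all $\varphi$, so $J_\psi=0$. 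The other way is strong approximation together with cuspidality.
\end{itemize}

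Your route makes irreducibility independent of the deep temperedness input, so it would still work where Ramanujan is unavailable. The paper's route is a one-liner, since Blasius is needed anyway for $|\alpha_{i,g,v}|=1$.

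One small caveat concerns your phrase ``Deligne's bound, equivalently $|\alpha_{i,g,v}|=1$''. This is an equivalence only after you also use unitarity of $\Pi(g)_v$. The trace bound $|\alpha_1+\alpha_2|\le2$ with $|\alpha_1\alpha_2|=1$ alone allows, for example, $\alpha_1=2i$, $\alpha_2=-i/2$. Unitarity reduces matters to the tempered or complementary-series cases, and the latter has $|\alpha_1+\alpha_2|>2$.

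Blasius's theorem states temperedness of the local components directly, so you do not need this translation here. Likewise, the comparison with the classical $T_v$-eigenvalue belongs to Lemma~\ref{lem:satake-hecke}, not to this proposition.
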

\begin{proof}
By definition, $\Pi(g)$ is an irreducible admissible summand of $A_0(\omega_g)$ containing $\Phi_g$. By \cite[Proposition~9.1]{JL} or \cite[Theorem~4.9]{RT}, we obtain \eqref{eq:attached-tensor}, with every $\Pi(g)_v$ irreducible and admissible.

At a real place, the definition of $\Phi_g$ gives
\[
 R(r_{\theta,\tau})\Phi_g=e^{-ik\theta}\Phi_g,\qquad
 \frac{\partial g_\lambda}{\partial\overline z_\tau}=0.
\]
Thus $\Phi_g$ has weight $k$ and is holomorphic at $\tau$. The real-place assertion of \cite[Theorem~4.7]{RT} gives $\Pi(g)_\tau\simeq D_{k-1}^\infty$.

It remains to treat a finite place $v$. Put
\[
 K_v^\circ=\GL_2(\mathcal O_v),\qquad t_v=\bmat{d_v^{-1}}001,
 \qquad K_v=t_vK_v^\circ t_v^{-1}.
\]
By definition, $\Phi_g$ is fixed by $K_v$. Viewing $t_v$ as an adele supported at $v$, the vector $R(t_v^{-1})\Phi_g$ is nonzero and $K_v^\circ$-fixed. Hence $\Pi(g)_v$ is the spherical constituent of $I_v(\chi_{1,g,v},\chi_{2,g,v})$ for two unramified characters. The scalar action gives
\[
 \chi_{1,g,v}\chi_{2,g,v}=\omega_{g,v},
\]
and therefore $\alpha_{1,g,v}\alpha_{2,g,v}=\omega_g(\pp_v)$. Since $\omega_g$ is a narrow ideal class character, it is unitary. The Ramanujan theorem \cite[Theorem~1 and Section~2.2]{Blasius} gives $|\alpha_{1,g,v}|=|\alpha_{2,g,v}|=1$. Thus both inducing characters are unitary, and \eqref{eq:irreducibility} gives irreducibility.
\end{proof}

\begin{lemma}\label{lem:satake-hecke}
Let $g$ be as in Proposition~\ref{prop:local-components}. For every finite place $v$,
\begin{equation}\label{eq:satake-hecke}
 \alpha_{1,g,v}+\alpha_{2,g,v}=q_v^{-(k-1)/2}c(\pp_v,g).
\end{equation}
\end{lemma}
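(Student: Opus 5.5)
The plan is to compute the eigenvalue of the local Hecke operator $T_v$ in two ways: once classically, via \eqref{eq:hecke-eigenvalue}, and once on the normalized spherical vector of the induced model $I_v(\chi_{1,g,v},\chi_{2,g,v})$ from Proposition~\ref{prop:local-components}. Equating the two values gives \eqref{eq:satake-hecke}.

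First I would move from $K_v$ to $K_v^\circ$. Put $\Phi'=R(t_v^{-1})\Phi_g$. This vector is nonzero and $K_v^\circ$-fixed. Since $T_v=t_vT_v^\circ t_v^{-1}$ with $T_v^\circ=K_v^\circ\diag(\varpi_v,1)K_v^\circ$, the operator $\int_{T_v}R(x)\,dx$ (suitably normalized, as in \eqref{eq:adelic-hecke}) is conjugated into $\int_{T_v^\circ}R(x)\,dx$. Concretely, conjugating the coset representatives in \eqref{eq:right-cosets} by $t_v^{-1}$ gives the standard ones:
\[
 \bmat100{\varpi_v},\qquad \bmat{\varpi_v}{u}01\quad(u\in\mathcal O_v/\pp_v).
\]
Together with \eqref{eq:hecke-intertwining}, this shows that $\Phi'$ is an eigenvector of the standard operator $T_v^\circ$ with eigenvalue $a_v(g)$. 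Under the restricted tensor isomorphism \eqref{eq:attached-tensor}, $\Phi'$ corresponds to a pure tensor whose $v$-component is $K_v^\circ$-fixed. Since $\dim I_v(\chi_{1,g,v},\chi_{2,g,v})^{K_v^\circ}=1$, this component is a multiple of $\phi_v^\circ$. Thus $a_v(g)$ equals the scalar by which $T_v^\circ$ acts on $\phi_v^\circ$.

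Next I would compute that scalar. Because $T_v^\circ\phi_v^\circ$ is again spherical, it equals $c\,\phi_v^\circ$, and we can read off $c$ by evaluating at $1$:
\[
 c=\phi_v^\circ\!\left(\bmat100{\varpi_v}\right)+\sum_{u}\phi_v^\circ\!\left(\bmat{\varpi_v}u01\right).
\]
By the defining transformation rule of the induced representation, these terms are as follows.
\begin{itemize}
 \item The first term equals $|\varpi_v^{-1}|_v^{1/2}\alpha_{2,g,v}=q_v^{1/2}\alpha_{2,g,v}$.
 \item Each of the $q_v$ terms in the sum equals $q_v^{-1/2}\alpha_{1,g,v}$.
\end{itemize}
Hence $c=q_v^{1/2}(\alpha_{1,g,v}+\alpha_{2,g,v})$. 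Combining this with $a_v(g)=q_v^{1-k/2}c(\pp_v,g)$ from \eqref{eq:hecke-eigenvalue} gives
\[
 \alpha_{1,g,v}+\alpha_{2,g,v}=q_v^{-1/2}q_v^{1-k/2}c(\pp_v,g)=q_v^{-(k-1)/2}c(\pp_v,g).
\]

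The main obstacle is the bookkeeping in the first step. One must check three things:
\begin{itemize}
 \item conjugation by $t_v$ carries the coset decomposition \eqref{eq:right-cosets}, with its $d_v^{-1}u$ entries, exactly onto the standard decomposition;
 \item the normalization of $T_v$ in \eqref{eq:adelic-hecke}, as a plain sum of right translations, matches the local sum used above;
 \item the abstract isomorphism $\Pi(g)_v\simeq I_v(\chi_{1,g,v},\chi_{2,g,v})$ is equivariant, so that the global eigenvalue is transported faithfully.
\end{itemize}
The labelling of $\chi_1$ versus $\chi_2$ does not matter, because the conclusion is symmetric in $\alpha_{1,g,v}$ and $\alpha_{2,g,v}$. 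As a consistency check, the constant term of the square relation \eqref{eq:hecke-square} matches $\alpha_{1,g,v}\alpha_{2,g,v}=\omega_g(\pp_v)$ from \eqref{eq:satake-product}.
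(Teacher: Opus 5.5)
Your proposal is correct and follows the paper's own proof essentially step for step. Both arguments conjugate by $t_v^{-1}$ to reach the standard spherical coset representatives, use the one-dimensionality of the $\GL_2(\mathcal O_v)$-fixed line to reduce to $\phi_v^\circ$, and read off $q_v^{1/2}(\alpha_{1,g,v}+\alpha_{2,g,v})$ from the induction law before comparing with \eqref{eq:hecke-eigenvalue}; your evaluation at the identity and the consistency check against \eqref{eq:hecke-square} only make explicit what the paper leaves implicit.
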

\begin{proof}
Equations \eqref{eq:hecke-eigenvalue} and \eqref{eq:hecke-intertwining} give
\begin{equation}\label{eq:adelic-eigenvalue}
 T_v\Phi_g=q_v^{1-k/2}c(\pp_v,g)\Phi_g.
\end{equation}
By Proposition~\ref{prop:local-components}, this operator acts only on the $v$-factor. Let $t_v$, $K_v^\circ$, and $K_v$ be as in its proof. Since $K_v=t_vK_v^\circ t_v^{-1}$ and $t_v$ commutes with $\diag(\varpi_v,1)$, applying $R(t_v^{-1})$ to \eqref{eq:adelic-eigenvalue} gives
\begin{equation}\label{eq:spherical-action}
 \begin{split}
 R\left(\bmat100{\varpi_v}\right)R(t_v^{-1})\Phi_g
 &+\sum_{u\in\mathcal O_v/\pp_v}R\left(\bmat{\varpi_v}u01\right)R(t_v^{-1})\Phi_g\\
 &=q_v^{1-k/2}c(\pp_v,g)R(t_v^{-1})\Phi_g.
 \end{split}
\end{equation}
The vector $R(t_v^{-1})\Phi_g$ is nonzero and $K_v^\circ$-fixed, so its $v$-factor spans the spherical line generated by $\phi_v^\circ$. The normalized induction law gives $q_v^{1/2}\alpha_{2,g,v}$ from the first term and $q_v^{-1/2}\alpha_{1,g,v}$ from each of the $q_v$ terms in the sum. Hence the left side acts by $q_v^{1/2}(\alpha_{1,g,v}+\alpha_{2,g,v})$, and comparison with \eqref{eq:spherical-action} proves \eqref{eq:satake-hecke}.
\end{proof}

We also use the following terminology. A character $\eta$, global or local, is unitary if $|\eta(t)|=1$ for every $t$. The central character of an automorphic representation $\Pi$ is the
idele-class character by which the scalar matrices act.
It factors through $F^\times\backslash\A_F^\times$. The representation is cuspidal when \eqref{eq:cuspidality} holds for every $\varphi\in\Pi$, and it is unitary when it admits an invariant positive-definite Hermitian form. At a finite place, an irreducible representation is unramified when it has a nonzero $\GL_2(\mathcal O_v)$-fixed vector.

\begin{proposition}\label{prop:global-properties}
Let $g$ be as in Proposition~\ref{prop:local-components}. Then $\Pi(g)$ is an irreducible unitary cuspidal automorphic representation with central character $\omega_g$. It is globally generic and unramified at every finite place. Its contragredient $\Pi(g)^\vee$ has central character $\omega_g^{-1}$ and has the same cuspidality, unitarity, genericity, and finite-place unramifiedness properties. More precisely, at every finite place,
\[
 \Pi(g)_v^\vee\simeq I_v(\chi_{1,g,v}^{-1},\chi_{2,g,v}^{-1}).
\]
\end{proposition}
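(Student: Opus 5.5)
The proof assembles facts already established or recalled above; there is no new analytic input. Irreducibility, cuspidality, the central character $\omega_g$, and unramifiedness at every finite place come from the preceding discussion and Proposition~\ref{prop:local-components}. Indeed, $\Pi(g)$ is by definition an irreducible summand of $A_0(\omega_g)$. Hence every vector of $\Pi(g)$ satisfies \eqref{eq:cuspidality}, and the scalar matrices act through $\omega_g$. At each finite $v$, Proposition~\ref{prop:local-components} identifies $\Pi(g)_v$ with the irreducible unramified principal series $I_v(\chi_{1,g,v},\chi_{2,g,v})$, which has a nonzero $K_v^\circ$-fixed vector. Global genericity is the Whittaker-model theorem $J_\psi$ quoted from \cite{RT,JL}.

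For unitarity, I will use that $\omega_g$ is unitary. Then $|\varphi|$ is left-invariant under $Z(\A_F)\GL_2(F)$, and cusp forms are rapidly decreasing on $Z(\A_F)\GL_2(F)\backslash\GL_2(\A_F)$. So the $L^2$ inner product
\[
 (\varphi_1,\varphi_2)=\int_{Z(\A_F)\GL_2(F)\backslash\GL_2(\A_F)}\varphi_1\overline{\varphi_2}
\]
converges on $\Pi(g)$. It is positive definite and invariant under right translation, which gives unitarity.

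For the contragredient, I will use the invariant pairing $\langle\varphi_1,\varphi_2\rangle=(\varphi_1,\overline{\varphi_2})$. Since $\overline{\Pi(g)}=\{\overline\varphi:\varphi\in\Pi(g)\}$ is again an irreducible summand of $A_0(\omega_g^{-1})$, unitarity identifies $\Pi(g)^\vee$ with $\overline{\Pi(g)}$. Complex conjugation preserves \eqref{eq:cuspidality} and fixed vectors of compact open subgroups. Whittaker functions go to $\overline\psi$-Whittaker functions, and $\overline\psi(x)=\psi(-x)$. Conjugating by $\diag(-1,1)$ turns these back into $\psi$-Whittaker functions, so $\Pi(g)^\vee$ is globally generic as well. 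The central character is $\overline{\omega_g}=\omega_g^{-1}$. Cuspidality, unitarity, and finite-place unramifiedness then carry over from $\Pi(g)$ to $\Pi(g)^\vee$.

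The explicit local statement is the only point that needs an argument. The contragredient of a normalized induction satisfies $I_v(\chi_1,\chi_2)^\vee\simeq I_v(\chi_1^{-1},\chi_2^{-1})$, through the invariant pairing $\int_{K_v^\circ}\phi(k)\phi'(k)\,dk$. The modulus factors combine to $|a/d|_v$, which is exactly what makes the integral over $B_v\backslash\GL_2(F_v)$ well defined. I will also check that $(\Pi(g)^\vee)_v\simeq(\Pi(g)_v)^\vee$ for the restricted tensor product. This holds because the pairing factorizes, with spherical vectors normalized so that they pair to $1$ at almost all places.

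A more direct alternative uses $|\alpha_{i,g,v}|=1$ from \eqref{eq:satake-product}. This gives $\chi_{i,g,v}^{-1}=\overline{\chi_{i,g,v}}$, so conjugating functions identifies $\overline{I_v(\chi_1,\chi_2)}$ with $I_v(\chi_1^{-1},\chi_2^{-1})$. Either way, the target $I_v(\chi_{1,g,v}^{-1},\chi_{2,g,v}^{-1})$ is irreducible by \eqref{eq:irreducibility}, since its inducing characters are unitary. The main care point is matching the modulus normalization in this duality; everything else is bookkeeping.
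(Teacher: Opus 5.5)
Your proposal is correct and follows essentially the same route as the paper. Both arguments get cuspidality and the central character from $\Pi(g)\subset A_0(\omega_g)$, unitarity from the $L^2$ (Petersson) form modulo the center, unramifiedness from Proposition~\ref{prop:local-components}, and the local statement from $I_v(\chi_1,\chi_2)^\vee\simeq I_v(\chi_1^{-1},\chi_2^{-1})$. The differences are minor. The paper derives genericity from the explicit Whittaker coefficient of $\Phi_g$, which is a nonzero multiple of $c(\OF,g)=1$, whereas you invoke the abstract $J_\psi$ theorem. The paper also simply cites the standard properties of the contragredient, which you make concrete through the identification $\Pi(g)^\vee\simeq\overline{\Pi(g)}$.
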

\begin{proof}
The adelic unipotent integral is the constant Fourier coefficient at the corresponding classical cusp. Since $g\in S_k$, these terms vanish, so $\Pi(g)$ is cuspidal \cite[Section~4.2 and Theorem~4.7]{RT}. By construction, $\Phi_g(zx)=\omega_g(z)\Phi_g(x)$, so the central character is $\omega_g$. This character is finite order and hence unitary. Therefore the usual invariant Petersson form on the quotient by the center is finite and positive definite on $\Pi(g)$; see \cite[Proposition~10.7]{JL}. Thus $\Pi(g)$ is unitary. Proposition~\ref{prop:local-components} gives unramifiedness at every finite place. Finally, the adelic Fourier expansion and \eqref{eq:whittaker} give a Whittaker coefficient of $\Phi_g$ that is a nonzero scalar multiple of $c(\OF,g)=1$, so $\Pi(g)$ is globally generic.

The contragredient of an irreducible unitary cuspidal representation is again irreducible, unitary, and cuspidal; its central character is the inverse, and global genericity passes from $\psi$ to $\psi^{-1}$. At a finite place, normalized induction gives
\[
 I_v(\chi_1,\chi_2)^\vee\simeq I_v(\chi_1^{-1},\chi_2^{-1}),
\]
which proves the final assertion.
\end{proof}

\subsection{Rankin--Selberg $L$-functions}\label{subsec:analytic}
We recall the standard $L$-function of Jacquet--Langlands and the Rankin--Selberg $L$-function of Jacquet--Piatetski-Shapiro--Shalika \cite{JL,JPSS}. Let $\Sigma_1$ and $\Sigma_2$ be irreducible unitary cuspidal automorphic representations of $\GL_2(\A_F)$. At a finite unramified place, write
\[
 \Sigma_{1,v}\simeq I_v(\chi_{1,v},\chi_{2,v}),\qquad
 \Sigma_{2,v}\simeq I_v(\mu_{1,v},\mu_{2,v}),
\]
and put $\alpha_{i,v}=\chi_{i,v}(\varpi_v)$ and $\beta_{j,v}=\mu_{j,v}(\varpi_v)$. Then
\begin{equation*}
 L_v(w,\Sigma_{1,v}\times\Sigma_{2,v})
 =\prod_{i,j=1}^2(1-\alpha_{i,v}\beta_{j,v}q_v^{-w})^{-1}.
\end{equation*}
For $\Sigma_{2,v}^\vee$, the factors contain $\alpha_{i,v}\beta_{j,v}^{-1}$. For $K\ge\ell\ge2$ and a real place $\tau$, the archimedean factor used for $\Pi(f)\times\Pi(h)^\vee$ is
\[
 \begin{split}
 L_\tau(w,D_{K-1}^\infty\times D_{\ell-1}^\infty)
 &=\Gamma_\C\left(w+\frac{K+\ell-2}{2}\right)
 \Gamma_\C\left(w+\frac{K-\ell}{2}\right),\\
 \Gamma_\C(z)&=2(2\pi)^{-z}\Gamma(z).
 \end{split}
\]
 At a ramified finite place the local factor is defined by the local Rankin--Selberg zeta integrals of \cite{JPSS}; no such finite place occurs for the representations used below.

The all-place and partial products are
\begin{equation*}
 \begin{split}
 L(w,\Sigma_1\times\Sigma_2)&=\prod_vL_v(w,\Sigma_{1,v}\times\Sigma_{2,v}),\\
 L^S(w,\Sigma_1\times\Sigma_2)&=\prod_{v\notin S}L_v(w,\Sigma_{1,v}\times\Sigma_{2,v}).
 \end{split}
\end{equation*}
They converge absolutely for $\Re(w)>1$. Their continuation and functional equation follow from \cite{JPSS}, and the pole criterion used below is \cite{CPS}.

\begin{theorem}[Cogdell--Piatetski-Shapiro \CPSthmRef]\label{thm:CPS}
Let $\Sigma_1$ and $\Sigma_2$ be irreducible cuspidal automorphic representations of $\GL_2(\A_F)$ with unitary central characters, and let $S$ contain the infinite places and every finite place at which either representation is ramified. If
\[
 \Sigma_1\not\simeq\Sigma_2\otimes|\det|^{it}\qquad\text{for every }t\in\R,
\]
then both $L(s,\Sigma_1\times\Sigma_2^\vee)$ and $L^S(s,\Sigma_1\times\Sigma_2^\vee)$ are entire. More generally, both are holomorphic at $s=1$ whenever $\Sigma_1\not\simeq\Sigma_2$; in the exceptional case the all-place product has a simple pole at $s=1$.
\end{theorem}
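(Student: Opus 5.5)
The plan is to reprove this through the Rankin--Selberg integral of Jacquet--Piatetski-Shapiro--Shalika \cite{JPSS}, following \cite{CPS}; the pole criterion then comes from the poles of a $\GL_2$ Eisenstein series. We assume throughout the unfolding and local theory of \cite{JPSS} as a black box.

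\textbf{Step 1: the global integral and its unfolding.} Fix a Schwartz--Bruhat function $\Phi$ on $\A_F^2$, and set $\eta=\omega_{\Sigma_1}\omega_{\Sigma_2}^{-1}$. Form the Eisenstein series
\[
E(g,s;\Phi,\eta)=|\det g|^s\int_{F^\times\backslash\A_F^\times}\sum_{\xi\in F^2\setminus\{0\}}\Phi(a\,\xi g)\,|a|^{2s}\eta(a)\,d^\times a .
\]
Take cusp forms $\varphi_1\in\Sigma_1$ and $\varphi_2'\in\Sigma_2^\vee$, and consider
\[
I(s)=\int_{Z(\A_F)\GL_2(F)\backslash\GL_2(\A_F)}\varphi_1(g)\,\varphi_2'(g)\,E(g,s;\Phi,\eta)\,dg .
\]
Because $\varphi_1,\varphi_2'$ are rapidly decreasing, $I(s)$ inherits the meromorphic continuation of $E$, and its poles are contained in those of $E$. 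For factorizable data, unfolding gives the Euler product
\[
I(s)=\prod_v\Psi_v(s,W_{1,v},W_{2,v}',\Phi_v).
\]
At unramified places with spherical data, $\Psi_v=L_v(s,\Sigma_{1,v}\times\Sigma_{2,v}^\vee)$.

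\textbf{Step 2: the poles of $E$ (the main step).} Poisson summation splits the Tate-type integral defining $E$ into an entire part plus two boundary terms, proportional to
\[
\frac{\widehat\Phi(0)}{s-1}\qquad\text{and}\qquad\frac{\Phi(0)}{s}.
\]
These terms survive only when $\eta$ restricted to the norm-one ideles is trivial, i.e.\ $\eta=|\cdot|^{-2it}$ for some real $t$. In that case $E$ has at most simple poles at $s=1+it$ and $s=it$. The residue at $s=1+it$ is a nonzero multiple of $\widehat\Phi(0)\,|\det g|^{-it}$.

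Hence
\[
\operatorname{Res}_{s=1+it}I(s)=c\,\widehat\Phi(0)\int\varphi_1(g)\,\varphi_2'(g)\,|\det g|^{-it}\,dg .
\]
The right-hand integral is an invariant pairing between $\Sigma_1$ and $\Sigma_2^\vee\otimes|\det|^{-it}$. It vanishes identically unless
\[
\Sigma_1\simeq\Sigma_2\otimes|\det|^{it},
\]
since the contragredient of $\Sigma_2^\vee\otimes|\det|^{-it}$ is $\Sigma_2\otimes|\det|^{it}$. The pole at $s=it$ is handled in the same way via the functional equation $s\mapsto1-s$. Consequently, under the hypothesis of the theorem $I(s)$ is entire. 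Without that hypothesis, $I(s)$ is still holomorphic at $s=1$ whenever $\Sigma_1\not\simeq\Sigma_2$: the only candidate pole there is the case $t=0$, which requires $\Sigma_1\simeq\Sigma_2$.

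\textbf{Step 3: from $I(s)$ to $L$.} For each $v\in S$, I would invoke the local theory of \cite{JPSS}: finite sums of local zeta integrals $\Psi_v$ realize $L_v(s,\Sigma_{1,v}\times\Sigma_{2,v}^\vee)$ exactly. At the archimedean places one uses the corresponding result for $\Gamma$-factors (Jacquet, Cogdell--Piatetski-Shapiro), which gives $L_\tau$ times an entire function without zeros near the relevant point. Taking finite linear combinations of global data then expresses $L(s,\Sigma_1\times\Sigma_2^\vee)$ as a combination of the $I(s)$. Holomorphy therefore transfers from $I$ to $L$.

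For the partial function, write
\[
L^S=L\prod_{v\in S}L_v^{-1}.
\]
Each $L_v^{-1}$ is entire: a polynomial in $q_v^{-s}$ at finite places, and $1/\Gamma$-factors at real places. So $L^S$ is holomorphic wherever $L$ is.

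In the exceptional case $\Sigma_1\simeq\Sigma_2$, choose data with $\widehat\Phi(0)\ne0$ and $\varphi_2'$ paired nontrivially with $\varphi_1$; the Petersson pairing is positive on $\varphi_1\otimes\overline{\varphi_1}$. Then the residue at $s=1$ is nonzero, and since every local factor is finite and nonzero at $s=1$, $L$ has a simple pole there.

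I expect the main obstacle to be the archimedean half of Step 3: knowing that the archimedean integrals generate exactly $L_\tau$, or at least have no zeros where $I$ could otherwise hide a pole. For the case actually used below (holomorphic discrete series), I would first try to bypass this by an explicit computation with weight vectors, which yields a product of $\Gamma$-functions matching $L_\tau$ up to a nonvanishing exponential factor.
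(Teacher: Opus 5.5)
Your proposal is correct in outline, but it takes a genuinely different route from the paper's proof.

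\textbf{What the paper does.} The paper proves nothing new here. It takes every assertion about the complete product $L(s,\Sigma_1\times\Sigma_2^\vee)$ directly from \cite[Theorem~2.4]{CPS}. Its only argument is the passage to $L^S$ via $L^S=L\prod_{v\in S}L_v^{-1}$ with entire inverse local factors, which is exactly the last paragraph of your Step~3.

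\textbf{What you do instead.} You reconstruct the proof of the cited theorem itself:
\begin{itemize}
\item you unfold against the mirabolic Eisenstein series;
\item you use Poisson summation to locate the only possible poles, at $s=1+it$ and $s=it$, and only when $\eta$ is trivial on the norm-one ideles;
\item you identify the residue as an invariant pairing, which forces a twist isomorphism;
\item you transfer holomorphy from $I(s)$ to $L$ via the JPSS local theory at finite places and the archimedean realizability result.
\end{itemize}

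\textbf{Comparison.} Your route buys transparency: one sees exactly where the hypothesis $\Sigma_1\not\simeq\Sigma_2\otimes|\det|^{it}$ enters, and why only $t=0$ can give a pole at $s=1$. It does not reduce dependence on the literature. The decisive archimedean input is that for each $s_0$ there are data with $\Psi_\tau=L_\tau e_\tau$, where $e_\tau$ is entire and $e_\tau(s_0)\ne0$. That is precisely the new content of \cite{CPS}, so the paper's one-line citation is the economical choice.

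\textbf{Small corrections if you write this out.}
\begin{itemize}
\item \emph{Sign of the twist.} With your normalization of $E$, Poisson summation produces the factor $|\det g|^{s-1}\widehat\Phi(0)$. So the residue at $s=1+it$ is a multiple of $\widehat\Phi(0)|\det g|^{it}$, not $|\det g|^{-it}$. Its nonvanishing forces $\Sigma_1\simeq\Sigma_2\otimes|\det|^{-it}$, which is the version consistent with $\eta=|\cdot|^{-2it}$. The slip is harmless because the hypothesis quantifies over all $t$.
\item \emph{Smooth versus $K$-finite vectors.} The archimedean result of \cite{CPS} uses smooth, not $K$-finite, Whittaker functions. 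Your global $\varphi_1,\varphi_2'$ must therefore be allowed to lie in the smooth completions. They remain rapidly decreasing, so Step~1 is unaffected. For the holomorphic discrete series actually used in the paper, your weight-vector computation (with $\Phi_\infty(x,y)=(x-iy)^{K-\ell}e^{-\pi(x^2+y^2)}$) does give $\Psi_\tau$ equal to $L_\tau$ times an exponential in $s$. So $K$-finite data suffice there.
\item \emph{Ramified additive character.} Places where $\psi_v$ is ramified (for $\psi=\psi_\Q\circ\Tr_{F/\Q}$, the places dividing $\df$) should be handled with the general local theory, like the places in $S$, rather than by the spherical computation.
\end{itemize}
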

\begin{proof}
The assertions for the all-place product are \cite[Theorem~2.4]{CPS}. For the partial product, use
\[
 L^S(s,\Sigma_1\times\Sigma_2^\vee)=L(s,\Sigma_1\times\Sigma_2^\vee)
 \prod_{v\in S}L_v(s,\Sigma_{1,v}\times\Sigma_{2,v}^\vee)^{-1}.
\]
Each inverse local factor is entire and creates no pole.
\end{proof}

We also use the following form of Shahidi's theorem.
\begin{theorem}[Shahidi \ShahidiThmRef]\label{thm:Shahidi}
Let $\Sigma_1$ and $\Sigma_2$ be irreducible cuspidal globally generic automorphic representations of $\GL_2(\A_F)$ with unitary central characters. If $S$ contains every infinite place and every finite ramified place, then the partial product has no zeros on $\Re(s)=1$. In particular, wherever it is holomorphic there,
\begin{equation}\label{eq:shahidi-nonzero}
 L^S(1+it,\Sigma_1\times\Sigma_2)\in\C^\times\qquad(t\in\R).
\end{equation}
\end{theorem}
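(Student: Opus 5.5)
The statement is a specialization of the Langlands--Shahidi nonvanishing theorem, so I would prove it by reduction to \cite[Theorem~5.2]{Shahidi}, checking that our objects satisfy its hypotheses and that the $L$-functions agree.

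\emph{Setup.} Take the maximal Levi subgroup $M=\GL_2\times\GL_2$ of $G=\GL_4$ and the generic cuspidal representation $\sigma=\Sigma_1\otimes\widetilde\Sigma_2$ of $M(\A_F)$. Here $\widetilde\Sigma_2$ is $\Sigma_2$ twisted by the outer automorphism $g\mapsto{}^tg^{-1}$, or equivalently one uses the convention in which the first adjoint constituent $r_1$ of ${}^LM$ on ${}^L\mathfrak n$ is $\rho_2\otimes\rho_2$. Then Shahidi's first partial $L$-function $L^S(s,\sigma,r_1)$ is the Rankin--Selberg product.

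I would verify this at each finite $v\notin S$. There $\Sigma_{1,v}\simeq I_v(\chi_{1,v},\chi_{2,v})$ and $\Sigma_{2,v}\simeq I_v(\mu_{1,v},\mu_{2,v})$, and the Satake parameter of $\sigma_v$ is $\diag(\alpha_{1,v},\alpha_{2,v})\times\diag(\beta_{1,v},\beta_{2,v})$. Hence $\det(1-r_1(\cdot)q_v^{-s})^{-1}$ is exactly $\prod_{i,j}(1-\alpha_{i,v}\beta_{j,v}q_v^{-s})^{-1}$, the local factor defined in Subsection~\ref{subsec:analytic}. So $L^S(s,\sigma,r_1)=L^S(s,\Sigma_1\times\Sigma_2)$ as Euler products, and therefore also as meromorphic continuations.

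\emph{Hypotheses.} Shahidi requires $\sigma$ to be globally generic, cuspidal, with unitary central character on $M$. Global genericity of each $\Sigma_i$ is assumed; for $\GL_2$ it holds automatically by the Whittaker-model theorem recalled before Proposition~\ref{prop:local-components}. The central character of $\sigma$ is built from those of $\Sigma_1$ and $\Sigma_2$, so it is unitary. The set $S$ contains all archimedean and all ramified finite places, as Shahidi's partial $L$-function needs.

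\emph{Conclusion.} Shahidi's theorem states that $L^S(s,\sigma,r_1)$ has no zero on $\Re(s)=1$; his proof uses the constant term of the Eisenstein series on $\GL_4$ and the nonvanishing of its Fourier coefficients. This gives the first assertion.

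For \eqref{eq:shahidi-nonzero}, fix $t\in\R$. If $L^S$ is holomorphic at $1+it$, its value there is finite and, by the above, nonzero, hence lies in $\C^\times$. Equivalently, one can replace $\Sigma_1$ by $\Sigma_1\otimes|\det|^{it}$, which is still generic cuspidal with unitary central character, and evaluate at $s=1$.

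The restriction ``wherever it is holomorphic'' is needed. By Theorem~\ref{thm:CPS}, a pole can occur only when $\Sigma_1\simeq\Sigma_2^\vee\otimes|\det|^{-it}$, and at a pole the value is not a complex number.

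\emph{Main obstacle.} I expect the hard step to be matching normalizations: Shahidi's variable $s$ must agree with ours, with no shift $s\mapsto s+\tfrac12$ or similar, and the contragredient/transpose convention must be right. I would check this by comparing the unramified Satake computation above with Shahidi's definition of $L(s,\sigma_v,r_1)$ through the Satake isomorphism. Since both are evaluated on the unitary normalization of $\Sigma_{i,v}$, this fixes the convention.
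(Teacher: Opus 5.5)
Your proposal is correct and takes the same route as the paper, which gives no independent argument for this statement and simply imports it from Shahidi's Theorem~5.2. Your additional checks are exactly the bookkeeping implicit in that citation: realizing $L^S(s,\Sigma_1\times\Sigma_2)$ as Shahidi's $L^S(s,\sigma,r_1)$ for the Levi $\GL_2\times\GL_2\subset\GL_4$ through the unramified Satake comparison, verifying genericity and unitarity, and treating possible poles via Theorem~\ref{thm:CPS} or the twist $\Sigma_1\otimes|\det|^{it}$.
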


We write $|\det|^{it}(g)=|\det g|_{\A_F}^{it}$.
\begin{proposition}\label{prop:analytic-application}
Let $K>\ell\ge2$ be even, and let $f\in S_K(\omega_f)$ and $h\in S_\ell(\omega_h)$ be normalized full-level eigentuples. Then $\Pi(f)$ and $\Pi(h)^\vee$ are irreducible unitary cuspidal globally generic representations with unitary central characters $\omega_f$ and $\omega_h^{-1}$, respectively. They are unramified at every finite place, and
\begin{equation*}
 \Pi(f)_\tau\simeq D_{K-1}^\infty,\qquad\Pi(h)_\tau\simeq D_{\ell-1}^\infty
 \qquad(\tau\mid\infty).
\end{equation*}
Moreover,
\begin{equation}\label{eq:not-twist}
 \Pi(f)\not\simeq\Pi(h)\otimes|\det|^{it}\qquad(t\in\R).
\end{equation}
For $S=\Sigma_\infty$, the function $L^S(s,\Pi(f)\times\Pi(h)^\vee)$ is entire in $s$, and
\begin{equation}\label{eq:boundary-value}
 L^S(1,\Pi(f)\times\Pi(h)^\vee)\in\C^\times.
\end{equation}
\end{proposition}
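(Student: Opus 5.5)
The first group of assertions is a direct collation of earlier results. Applying Proposition~\ref{prop:global-properties} to $g=f$ and to $g=h$ shows that $\Pi(f)$ and $\Pi(h)$, and hence $\Pi(h)^\vee$, are irreducible, unitary, cuspidal and globally generic. It also shows they are unramified at every finite place, with central characters $\omega_f$ and $\omega_h^{-1}$ respectively. These are finite-order narrow class characters, hence unitary. The archimedean components $\Pi(f)_\tau\simeq D_{K-1}^\infty$ and $\Pi(h)_\tau\simeq D_{\ell-1}^\infty$ come from Proposition~\ref{prop:local-components}.

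For \eqref{eq:not-twist} I compare archimedean components. Suppose $\Pi(f)\simeq\Pi(h)\otimes|\det|^{it}$ for some $t\in\R$. Since $|\det|^{it}_{\A_F}$ factors as $\prod_v|\det|_v^{it}$, an isomorphism of restricted tensor products yields an isomorphism of local components at every place. I would note this uses uniqueness of local components, which follows from \cite[Proposition~9.1]{JL}. At a real place $\tau$ this would give $D_{K-1}^\infty\simeq D_{\ell-1}^\infty\otimes|\det|^{it}$. But $K-1\ne\ell-1$ because $K>\ell$, and this is excluded by the rotation-exponent remark preceding Proposition~\ref{prop:local-components}. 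Hence \eqref{eq:not-twist} holds.

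The analytic assertions then follow from the two cited theorems. Take $\Sigma_1=\Pi(f)$ and $\Sigma_2=\Pi(h)$, so that $\Sigma_2^\vee=\Pi(h)^\vee$. The set $S=\Sigma_\infty$ contains all infinite places and, by finite-place unramifiedness, every ramified place. Theorem~\ref{thm:CPS}, together with \eqref{eq:not-twist}, shows that $L^S(s,\Pi(f)\times\Pi(h)^\vee)$ is entire. Next apply Theorem~\ref{thm:Shahidi} to the pair $\Pi(f)$ and $\Pi(h)^\vee$, both cuspidal and globally generic with unitary central characters. It gives nonvanishing on $\Re(s)=1$ wherever the function is holomorphic, hence everywhere on that line, and in particular at $s=1$. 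This is \eqref{eq:boundary-value}.

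The only step needing care is the transfer of a global isomorphism to local ones at the archimedean places, i.e.\ that $(\Pi\otimes|\det|^{it})_\tau=\Pi_\tau\otimes|\det|_\tau^{it}$ and that local factors of $\rtensor_v\Pi_v$ are determined up to isomorphism. This is standard, and I would simply cite the tensor-product theorem. Everything else is bookkeeping of hypotheses.
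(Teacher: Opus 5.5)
Your proposal is correct and takes essentially the same route as the paper. You collate Propositions~\ref{prop:local-components} and~\ref{prop:global-properties}, rule out $\Pi(f)\simeq\Pi(h)\otimes|\det|^{it}$ by comparing the least rotation exponents ($K$ versus $\ell$) of the real components, and then apply Theorem~\ref{thm:CPS} for entireness and Theorem~\ref{thm:Shahidi} (to $\Pi(f)$ and $\Pi(h)^\vee$) for nonvanishing at $s=1$; your remark that a global isomorphism of restricted tensor products forces local isomorphisms spells out a step the paper uses implicitly.
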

\begin{proof}
Propositions~\ref{prop:local-components} and \ref{prop:global-properties} give all the stated properties of $\Pi(f)$ and $\Pi(h)^\vee$, including unramifiedness at every finite place. Hence $S=\Sigma_\infty$ contains every ramified place required by Theorems~\ref{thm:CPS} and \ref{thm:Shahidi}.

If the isomorphism excluded in \eqref{eq:not-twist} existed, then at any real place we would have
\[
 D_{K-1}^\infty\simeq D_{\ell-1}^\infty\otimes|\det|^{it}.
\]
The twist does not change the rotation exponents, while the least exponents are $K$ and $\ell$. This is impossible because $K>\ell$. Theorem~\ref{thm:CPS} therefore shows that $L^S(s,\Pi(f)\times\Pi(h)^\vee)$ is entire. Finally, Theorem~\ref{thm:Shahidi} shows that its value at $s=1$ is nonzero, proving \eqref{eq:boundary-value}.
\end{proof}

\section{Rankin--Selberg pairings and Hecke product identities}\label{sec:proofs}
In this section we prove Theorems~A and~B. We first compare the classical and automorphic Rankin--Selberg series, then treat the weight-two boundary, and finally apply the resulting nonvanishing to the product problem.

\subsection{The classical and automorphic Rankin--Selberg series}\label{subsec:series}
We first define the classical series. Let $K>\ell\ge2$ be even, $f\in S_K$, and $h\in S_\ell$. For $\Re(s)>(K+\ell)/2$, define the classical diagonal Rankin--Selberg series
\begin{equation}\label{eq:classical-series}
 \Lcl(s;f,h)=\sum_{0\ne\ideal\subset\OF}
 \frac{c(\ideal,f)\overline{c(\ideal,h)}}{N(\ideal)^s}.
\end{equation}
For a narrow ideal class character $\chi$, put
\begin{equation}\label{eq:finite-Hecke-L}
 L_F(s,\chi)=\prod_{0\ne\pp\subset\OF}
 \bigl(1-\chi(\pp)N(\pp)^{-s}\bigr)^{-1}
 \qquad(\Re(s)>1).
\end{equation}
Thus $L_F(s,1)=\zeta_F(s)$.

Suppose first that $f\in S_K(\omega_f)$ and $h\in S_\ell(\omega_h)$ are normalized eigentuples. If $H_j(x_1,x_2)=\sum_{m=0}^j x_1^m x_2^{j-m}$, then Proposition~\ref{prop:local-components}, the recurrence \eqref{eq:hecke-coefficients}, and Lemma~\ref{lem:satake-hecke} give
\begin{align*}
 c(\pp_v^j,f)&=q_v^{j(K-1)/2}H_j(\alpha_{1,f,v},\alpha_{2,f,v}),\\
 \overline{c(\pp_v^j,h)}&=q_v^{j(\ell-1)/2}
 H_j(\alpha_{1,h,v}^{-1},\alpha_{2,h,v}^{-1}).
\end{align*}
Here we used $|\alpha_{i,h,v}|=1$. From the elementary identity
\[
 \sum_{j\ge0}H_j(x_1,x_2)H_j(y_1,y_2)X^j
 =\frac{1-x_1x_2y_1y_2X^2}
 {\prod_{i,j=1}^2(1-x_iy_jX)},
\]
and \eqref{eq:satake-product}, we obtain, with $w=s-(K+\ell-2)/2$, the local identity
\begin{equation}\label{eq:local-comparison}
 \sum_{j\ge0}\frac{c(\pp_v^j,f)\overline{c(\pp_v^j,h)}}{q_v^{js}}
 =
 \frac{L_v(w,\Pi(f)_v\times\Pi(h)_v^\vee)}
 {L_{F,v}(2w,\omega_f\omega_h^{-1})},
\end{equation}
where $L_{F,v}(z,\chi)=(1-\chi(\pp_v)q_v^{-z})^{-1}$. Indeed, the numerator in the generating-function identity is $1-\omega_f(\pp_v)\omega_h(\pp_v)^{-1}q_v^{-2w}$.

\begin{proposition}\label{prop:classical-RS}
Let $w=s-(K+\ell-2)/2$.
If $f\in S_K(\omega_f)$ and $h\in S_\ell(\omega_h)$ are normalized eigentuples, then
\begin{equation}\label{eq:global-comparison}
 L_F(2w,\omega_f\omega_h^{-1})\Lcl(s;f,h)
 =L^S(w,\Pi(f)\times\Pi(h)^\vee),
\end{equation}
where $S=\Sigma_\infty$.

For arbitrary $f\in S_K$ and $h\in S_\ell$, the series \eqref{eq:classical-series} has a meromorphic continuation to $\C$ and is holomorphic in
\begin{equation}\label{eq:holomorphy-region}
 \Re(s)>\frac{K+\ell-1}{2}.
\end{equation}
\end{proposition}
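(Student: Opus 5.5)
The plan has two parts: first \eqref{eq:global-comparison} for eigentuples, then the continuation for arbitrary $f,h$, reduced to the eigentuple case by linearity.

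\textbf{Eigentuple case.} Each nonzero integral ideal factors uniquely as $\prod_v\pp_v^{j_v}$. For a normalized eigentuple, the recurrence \eqref{eq:hecke-coefficients} together with $c(\OF,g)=1$ shows that $c(\cdot,g)$ is multiplicative in coprime ideals. Hence, for $\Re(s)$ large, $\Lcl(s;f,h)$ has an Euler product whose $v$-factor is the left side of \eqref{eq:local-comparison}. Absolute convergence for $\Re(s)>(K+\ell)/2$ follows from Ramanujan, $|c(\ideal,g)|\le d(\ideal)N(\ideal)^{(k-1)/2}$, which comes from \eqref{eq:satake-product}. Multiplying the local identities \eqref{eq:local-comparison} over all finite $v$ gives \eqref{eq:global-comparison} for $\Re(w)>1$, where both Euler products converge absolutely. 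Since $L^S(w,\Pi(f)\times\Pi(h)^\vee)$ is entire by Proposition~\ref{prop:analytic-application} (here $K>\ell$ is used) and $L_F(2w,\omega_f\omega_h^{-1})$ is meromorphic, \eqref{eq:global-comparison} then holds everywhere as an identity of meromorphic functions, and it yields the continuation of $\Lcl(s;f,h)$.

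\textbf{General $f,h$.} Both spaces decompose by \eqref{eq:central-decomposition}, and each $S_K(\omega)$, $S_\ell(\omega')$ has a basis of simultaneous Hecke eigentuples (the Hecke algebra is commutative and normal). Each basis element can be normalized by $c(\OF,\cdot)=1$, because a cuspidal eigentuple with $c(\OF,g)=0$ has all coefficients zero by \eqref{eq:hecke-eigenvalue} and the recurrence. Since $\Lcl$ is sesquilinear in $(f,h)$, it is a finite linear combination of the series $\Lcl(s;f_i,h_j)$ for normalized eigentuples. Each of these equals
\[
L^S(w,\Pi(f_i)\times\Pi(h_j)^\vee)\,/\,L_F(2w,\omega_{f_i}\omega_{h_j}^{-1}),
\]
so the general series continues meromorphically to $\C$.

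\textbf{Holomorphy in \eqref{eq:holomorphy-region}.} Here $\Re(s)>(K+\ell-1)/2$ means $\Re(w)>1/2$, that is, $\Re(2w)>1$. In that region $L_F(2w,\chi)$ is given by an absolutely convergent nonvanishing Euler product, so its reciprocal is holomorphic. Since the numerator is entire, each summand, and hence $\Lcl(s;f,h)$, is holomorphic there.

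The main point needing care is the passage from the Hecke recurrence to the Euler product. Specifically, it must be checked that \eqref{eq:hecke-coefficients} gives full multiplicativity across different components $\lambda$ of the tuple. This is handled by the ideal-indexed normalization $c(\ideal,g)$ and by the fact that the $T_v$ commute. Everything else is formal given Proposition~\ref{prop:analytic-application}.
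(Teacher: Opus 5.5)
Your proposal is correct and follows the same route as the paper: you multiply the local identity \eqref{eq:local-comparison} over all finite places, use the entirety of $L^S(w,\Pi(f)\times\Pi(h)^\vee)$ from Proposition~\ref{prop:analytic-application} to get the continuation, expand arbitrary $f,h$ in normalized eigenbases, and use that $L_F(2w,\chi)$ is an absolutely convergent, nonvanishing Euler product for $\Re(2w)>1$. The details you add that the paper leaves implicit are all sound: multiplicativity of $c(\cdot,g)$ from the Hecke recurrence, absolute convergence via Ramanujan, and why eigenbasis elements can be normalized. Multiplicativity only needs the eigen-relation for each single $T_v$, not the commutativity of the $T_v$.
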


\begin{proof}
Multiplying \eqref{eq:local-comparison} over all finite places gives \eqref{eq:global-comparison} for $\Re(w)>1$. Proposition~\ref{prop:analytic-application} shows that the Rankin--Selberg numerator is entire. Hence the quotient in \eqref{eq:global-comparison} gives a meromorphic continuation for each normalized eigenpair.

By the commutative normal Hecke theory recalled in Subsection~\ref{subsec:dictionary}, choose orthogonal bases $\{f_i\}$ of $S_K$ and $\{h_j\}$ of $S_\ell$ consisting of normalized eigentuples, with central characters $\omega_{f_i}$ and $\omega_{h_j}$. Write $f=\sum_iA_if_i$ and $h=\sum_jB_jh_j$.
By sesquilinearity and \eqref{eq:global-comparison},
\begin{equation}\label{eq:arbitrary-RS-decomposition}
 \Lcl(s;f,h)
 =\sum_{i,j}A_i\overline{B_j}\,
 \frac{L^S(w,\Pi(f_i)\times\Pi(h_j)^\vee)}
 {L_F(2w,\omega_{f_i}\omega_{h_j}^{-1})}.
\end{equation}
This finite sum is meromorphic on $\C$. If $\Re(2w)>1$, then each $L_F(2w,\omega_{f_i}\omega_{h_j}^{-1})$ is represented by an absolutely convergent nonzero Euler product. Since the Rankin--Selberg numerators are entire, every summand is holomorphic there. The condition $\Re(2w)>1$ is equivalent to $\Re(s)>(K+\ell-1)/2$, which proves \eqref{eq:holomorphy-region}.
\end{proof}

\subsection{The Petersson identity and the weight-two boundary}\label{subsec:boundary}
We introduce the Eisenstein family. Let $k,\ell\ge2$ be even and $r\ge0$, and put $K=k+\ell+2r$.
Let $\Gamma_{\infty,\lambda}$ be the stabilizer of infinity in $\Gamma_\lambda$. For
\begin{equation}\label{eq:eisenstein-region}
 \Re(u)>1-\frac k2,
\end{equation}
set
\begin{equation}\label{eq:eisenstein-family}
 E_{k,\lambda}(z,u)=q_\lambda^{-k/2-u}
 \sum_{\gamma\in\Gamma_{\infty,\lambda}\backslash\Gamma_\lambda}(Y^u\slashk{k}\gamma)(z),
 \quad E_k(\cdot,u)=(E_{k,\lambda}(\cdot,u))_\lambda.
\end{equation}
The series is absolutely and locally uniformly convergent in \eqref{eq:eisenstein-region}, also after any fixed number of $z$-derivatives. For $k>2$ put $E_k=E_k(\cdot,0)$. For $k=2$, its continuation is regular at $u=0$ and, since $d>1$, its value there is holomorphic in $z$. We write $E_2=E_2(\cdot,0)$. The continuation and the estimates needed below are recalled in Lemma~\ref{lem:fourier-bounds}.

For $u\ne0$, the components of $E_2(\cdot,u)$ are generally not holomorphic, but they are smooth weight-two functions satisfying
\[
 E_{2,\lambda}(\cdot,u)\slashk{2}\gamma=E_{2,\lambda}(\cdot,u)
 \qquad(\gamma\in\Gamma_\lambda).
\]
Let $\mathcal K$ be a finite set of cusp charts for all the components $\Gamma_\lambda\backslash\mathbb H^d$. For $\kappa\in\mathcal K$, let $\lambda(\kappa)$ be the component containing the cusp and choose $\sigma_\kappa\in\GL_2^+(F)$ carrying $\infty$ to that cusp. Put
\[
 E_{2,\kappa}(z,u)
   =\bigl(E_{2,\lambda(\kappa)}(\cdot,u)\slashk{2}\sigma_\kappa\bigr)(z),
 \qquad
 h_\kappa(z)
   =\bigl(h_{\lambda(\kappa)}\slashk{\ell}\sigma_\kappa\bigr)(z).
\]
Thus $\lambda$ indexes a narrow-class component, while $\kappa$ indexes a chosen cusp chart on one of the components.

Shimura's general Fourier theory for automorphic eigenfunctions gives a Fourier--Whittaker expansion in every cusp chart; see \cite[(2.12)--(2.21)]{Shimura1985}. Let $\mathfrak l_\kappa\subset F$ be the translation lattice of the chart and let $\mathfrak b_\kappa=\mathfrak l_\kappa^\vee$ be its trace-dual Fourier lattice. In our weight-two specialization
\[
 \sigma=2\one,\qquad \tau^{\mathrm{Sh}}=0,\qquad \rho=\one,
\]
Shimura's zero-mode equation \cite[(3.3)]{Shimura1985} and Propositions~3.1--3.2 there give the two exponents $u$ and $-1-u$ at $s=1+u$.

For $t>0$ and $\Re(\beta)>0$, put
\[
 V(t;\alpha,\beta)
 :=
 \frac{e^{-t/2}t^\beta}{\Gamma(\beta)}
 \int_0^\infty e^{-tv}(1+v)^{\alpha-1}v^{\beta-1}\,dv.
\]
For fixed $t>0$, this has a holomorphic continuation in $(\alpha,\beta)$ to $\C^2$. For $0\ne\xi\in\mathfrak b_\kappa$, write $\xi_\tau=\tau(\xi)$ and $t_\tau=4\pi|\xi_\tau|y_\tau$, and set
\[
 W_{\kappa,\xi}(y,u)=
 \prod_{\xi_\tau>0}V(t_\tau;2+u,u)
 \prod_{\xi_\tau<0}t_\tau^{-2}V(t_\tau;u,2+u).
\]
Using Shimura's Whittaker description \cite[(2.14)--(2.21), (4.21b)]{Shimura1985}, there are functions $A_\kappa(u)$, $B_\kappa(u)$ and scalar coefficients $c_{\kappa,\xi}(u)$ such that
\begin{equation}\label{eq:eisenstein-fourier}
 E_{2,\kappa}(z,u)
 =A_\kappa(u)Y^u+B_\kappa(u)Y^{-1-u}
 +\sum_{0\ne\xi\in\mathfrak b_\kappa}
 c_{\kappa,\xi}(u)W_{\kappa,\xi}(y,u)e^{2\pi i\Tr(\xi x)}.
\end{equation}

We shall also use a uniform moderate-growth estimate. By Proposition~4.4 and its proof in \cite{Shimura1985}, the growth condition \cite[(2.7c)]{Shimura1985} is uniform when the spectral parameter ranges over a compact set. Since $\sigma=2\one$, the factor $y^{\sigma/2}$ in Lemma~11.2 there is exactly $Y$. Applying that lemma and its proof, and using that only finitely many cusp charts occur, we may shrink the parameter disk and choose constants $B_0,C_0>0$, independent of $\kappa$, $u$, $x$, and $y$, such that
\begin{equation}\label{eq:uniform-moderate-growth}
 Y\,|E_{2,\kappa}(x+iy,u)|
 \le C_0\bigl(Y^{B_0}+Y^{-B_0}\bigr)
 \qquad(|u|\le2\delta_0)
\end{equation}
for some $\delta_0>0$ and all $x+iy\in\HH^d$. Here the constants depend only on the fixed field, the finitely many cusp charts, their normalizations, and the chosen compact parameter disk; in particular they do not depend on $x$, $y$, or $u$.

For a multi-index $a=(a_\tau)_\tau$, put
\[
 y^a=\prod_\tau y_\tau^{a_\tau},
 \qquad
 |\xi|_1=\sum_\tau|\xi_\tau|,
 \qquad
 T_\xi(y)=\sum_\tau|\xi_\tau|y_\tau.
\]
By a reduced cusp region at $\kappa$ we mean a standard cusp region with $Y\ge1$ in which the logarithmic unit directions are restricted to a fixed compact fundamental domain.

\begin{lemma}\label{lem:fourier-bounds}
There is $0<\delta<1/8$ such that the family and its Fourier coefficients in \eqref{eq:eisenstein-fourier} are holomorphic on a neighborhood of $|u|\le2\delta$. The following bounds hold for $j=0,1$ and $|u|\le\delta$, with constants independent of $u$, $\xi$, and $z$. They may be chosen uniformly over the finitely many cusps and components.

\noindent \textup{(i)} The constant coefficients satisfy
\begin{equation}\label{eq:constant-coeff-bounds}
 |\partial_u^jA_\kappa(u)|+|\partial_u^jB_\kappa(u)|\le C,
 \qquad B_\kappa(0)=0,\qquad |B_\kappa(u)|\le C|u|.
\end{equation}
\textup{(ii)} For some $M\ge0$, the nonzero scalar coefficients satisfy
\begin{equation}\label{eq:fourier-coeff-bounds}
 |\partial_u^jc_{\kappa,\xi}(u)|\le C(1+|\xi|_1)^M
 \qquad(0\ne\xi\in\mathfrak b_\kappa).
\end{equation}
\textup{(iii)} For every multi-index $a$, there are $M_a\ge0$ and $C_a>0$ such that, for every $y_\tau>0$,
\begin{equation}\label{eq:whittaker-derivative-bounds}
 y^a\left|\partial_u^jD^a
 \bigl(W_{\kappa,\xi}(y,u)e^{2\pi i\Tr(\xi x)}\bigr)\right|
 \le C_a e^{-\pi T_\xi(y)}
 \prod_\tau\bigl(1+(|\xi_\tau|y_\tau)^{-M_a}\bigr).
\end{equation}
\textup{(iv)} If $h\in S_\ell$, then, at every cusp and for every multi-index $b$ and $N>0$,
\begin{equation}\label{eq:cusp-decay}
 y^{(\ell/2)\one+b}|D^bh_\kappa(z)|
 \le C_{b,N}(1+Y)^{-N}
\end{equation}
on the reduced cusp region.
\end{lemma}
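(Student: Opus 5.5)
We prove the four parts separately. Parts (i)--(iii) come from the explicit Eisenstein Fourier expansion near $u=0$. Part (iv) comes from the Fourier expansion of the cusp form $h$.

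\emph{Holomorphy and part (i).} Unfold $E_{2,\lambda}(\cdot,u)\slashk{2}\sigma_\kappa$ along the translation lattice $\mathfrak l_\kappa$ in the convergent range $\Re(u)>0$, using the Bruhat decomposition of $\Gamma_{\infty}\backslash\Gamma_\lambda$. This expresses everything through standard Hecke $L$-functions of narrow class characters.
\begin{itemize}
\item $A_\kappa(u)$ is a constant (a product of class-number data and $q_\lambda$-powers) coming from the identity coset.
\item $B_\kappa(u)$ is a ratio of the form $\prod_\tau\frac{\Gamma(1+2u)}{\Gamma(u)\Gamma(2+u)}$ (up to $\pi$-powers) times a partial zeta quotient $\zeta(1+2u)$-type over the cusp ideal class, divided by a zeta value at $2+2u$.
\end{itemize}
Since $d>1$, the factor $\Gamma(u)^{-d}$ has a zero of order $d\ge2$ at $u=0$, while the partial zeta function has at most a simple pole at $1+2u=1$. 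So $B_\kappa$ extends holomorphically to a disk, and $B_\kappa(0)=0$; indeed $B_\kappa$ vanishes to order at least $d-1\ge1$. Cauchy estimates on a slightly larger disk then give $|B_\kappa(u)|\le C|u|$ and the bounds on $\partial_u^j$ for $|u|\le\delta$. This is where we shrink $\delta$ below $1/8$ and inside the disk of \eqref{eq:uniform-moderate-growth}.

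\emph{Part (ii).} The coefficients $c_{\kappa,\xi}(u)$ are, up to the factor $\Gamma(u)^{-\#\{\xi_\tau<0\}}\Gamma(2+u)^{-\#\{\xi_\tau>0\}}$ and $\pi$-powers, divisor-type sums $\sum_{\mathfrak c\mid(\xi)\mathfrak b^{-1}}\chi(\mathfrak c)N(\mathfrak c)^{-1-2u}$ divided by $L(2+2u,\chi)$. For $|u|\le2\delta$ with $\delta<1/8$:
\begin{itemize}
\item the denominator is a nonvanishing absolutely convergent Euler product, bounded below;
\item the divisor sum is bounded by $d((\xi))N(\xi)^{1/4}\ll(1+|\xi|_1)^M$.
\end{itemize}
Holomorphy in $u$ plus Cauchy on a circle of radius $\delta$ gives the $j=1$ derivative bound. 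Alternatively, without explicit formulas, we can extract $c_{\kappa,\xi}(u)$ as a Fourier integral over a fundamental domain of $\mathfrak l_\kappa$ at a fixed height $y$ with $|\xi_\tau|y_\tau\asymp1$. Evaluating there, \eqref{eq:uniform-moderate-growth} together with a lower bound for $|W_{\kappa,\xi}|$ at that height gives polynomial growth in $\xi$. This is a useful fallback if the explicit formula needs class-group bookkeeping.

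\emph{Part (iii).} This is the main technical step. Write $W$ as the product over $\tau$ of the one-variable functions $V(t;\alpha,\beta)$ with $t_\tau=4\pi|\xi_\tau|y_\tau$. Then $y_\tau^{a_\tau}D_\tau^{a_\tau}$, applied to $V(t_\tau)e^{2\pi i\xi_\tau x_\tau}$, is a finite combination of terms $t_\tau^{m}\partial_t^{m'}V$ with $m,m'\le a_\tau$. So it suffices to show, for $|u|\le\delta$ and $j=0,1$,
\[
 |\partial_u^j t^{m}\partial_t^{m'}V(t;\alpha,\beta)|\le Ce^{-t/2}(1+t^{-M})
\]
for the two parameter choices $(2+u,u)$ and $(u,2+u)$, the latter including the extra factor $t^{-2}$.
\begin{itemize}
\item For $t\ge1$, differentiate under the integral. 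The integrand is $e^{-tv}(1+v)^{\alpha-1}v^{\beta-1}/\Gamma(\beta)$. Since $\Re\beta$ may be near $0$, first integrate by parts once in $v$ (or use $1/\Gamma(\beta)=\beta/\Gamma(\beta+1)$) to reach an integrable $v^{\beta}$. This yields the factor $e^{-t/2}$ times a polynomial in $t$, which is absorbed into $e^{-\pi T_\xi}$ since $e^{-t/2}=e^{-2\pi|\xi_\tau|y_\tau}$.
\item For $t\le1$, substitute $v=w/t$, which makes the integral $t^{-\beta}\int e^{-w}(1+w/t)^{\alpha-1}w^{\beta-1}dw$. This gives at worst a polynomial blow-up $t^{-M}$, uniformly in $u$; the $\log t$ factors arising from $\partial_u$ are absorbed by raising $M$.
\end{itemize}
Taking products over $\tau$ gives \eqref{eq:whittaker-derivative-bounds}.

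\emph{Part (iv).} This is standard. Expand $h_\kappa=\sum_{\xi\gg0}a(\xi)e^{2\pi i\Tr(\xi z)}$, which has no constant term since $h$ is a cusp form. Apply $D^b$, which multiplies each term by $\xi^b$. Use polynomial bounds on $a(\xi)$. In the reduced cusp region, with $Y\ge1$ and unit directions compact, all $y_\tau\asymp Y^{1/d}$, so $\Tr(\xi y)\gg|\xi|_1Y^{1/d}$. The resulting sum is $O(e^{-cY^{1/d}})$, which beats any $y^{(\ell/2)\one+b}(1+Y)^{N}$.
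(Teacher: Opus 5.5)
Your proposal is correct, but parts (i) and (iii), and the main version of (ii), take a different route from the paper.

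The paper never computes the Fourier coefficients. Its argument runs as follows.
\begin{itemize}
\item[(i)] Regularity of the family at $u=0$ comes from Shimura's continuation results. It recovers $A_\kappa$ and $B_\kappa$, hence their holomorphy, by evaluating the torus-averaged constant term at two heights. It gets $B_\kappa(0)=0$ softly: $E_{2,\kappa}(\cdot,0)$ is holomorphic while $Y^{-1}$ is not.
\item[(ii)] This is exactly your fallback. It extracts the coefficient at the height where every $t_\tau=T$, using \eqref{eq:uniform-moderate-growth} and a lower bound for $V(T;\cdot,\cdot)$ from Shimura's large-$t$ asymptotics.
\item[(iii)] It quotes Shimura's bound $|V(t;\alpha,\beta)|\le Ce^{-t/2}(1+t^{-B})$, iterates the recurrence $t\partial_tV=(\beta-t/2)V-\beta V(t;\alpha,\beta+1)$, and applies Cauchy's formula in $u$.
\end{itemize}

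Your Bruhat-cell computation instead proves $B_\kappa(0)=0$ arithmetically: the $d$-fold zero of $\Gamma(u)^{-d}$ beats the at most simple pole of the partial zeta function. So you derive, rather than import from the holomorphy of $E_2$, the point where $d>1$ enters, and you even get vanishing order $d-1$. Likewise, your direct estimates of the $V$-integral make (iii) self-contained. These are the integration by parts handling $\beta=u$ near $0$ and the substitution $v=w/t$ for $t\le1$.

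The cost is the constant-term and coefficient bookkeeping over all cusps of all components $\Gamma_\lambda$ for arbitrary narrow class number, which you only sketch. Two details in that bookkeeping need correcting. First, $A_\kappa\equiv0$ at cusps inequivalent to $\infty$, so it does not always come from the identity coset. Second, for $t\ge1$ you actually obtain $e^{-t/2}$ times a polynomial, not the bound you display. That is enough only because the target \eqref{eq:whittaker-derivative-bounds} has $e^{-t_\tau/4}$ at each place.
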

\begin{proof}
We treat the four assertions separately.

For (i), the continuation results of \cite[Theorem~4.2 and Propositions~3.1, 8.2(i),(iii)]{Shimura1985} show that the family is regular near $u=0$ and that the zero Fourier mode has exactly the two exponents appearing in \eqref{eq:eisenstein-fourier}. Since these exponents are $u$ and $-1-u$, they remain distinct on a sufficiently small disk. The constant Fourier coefficient is obtained by integrating $E_{2,\kappa}(x+iy,u)$ over the compact $x$-torus. Evaluating it at two fixed values of $Y$ therefore solves for $A_\kappa(u)$ and $B_\kappa(u)$ and shows that both are holomorphic in $u$. At $u=0$ the function $E_{2,\kappa}(z,0)$ is holomorphic in $z$, whereas $Y^{-1}$ is not; hence $B_\kappa(0)=0$. Cauchy's formula on a slightly larger disk now gives
\[
 |\partial_u^jA_\kappa(u)|+|\partial_u^jB_\kappa(u)|\le C
 \qquad(j=0,1),
\]
and the mean-value formula gives $|B_\kappa(u)|\le C|u|$. This proves (i).

For (ii), shrink $\delta$ if necessary so that $|u|\le2\delta$ lies in the disk of \eqref{eq:uniform-moderate-growth}. We extract the $\xi$-th Fourier coefficient at a height depending on $\xi$. By Shimura's uniform asymptotic \cite[(10.8)]{Shimura1985}, there is a sufficiently large fixed $T>0$ such that, for $|u|\le2\delta$, both $V(T;2+u,u)$ and $V(T;u,2+u)$ are nonzero and bounded away from zero in absolute value. Since there are only finitely many sign vectors, there is therefore a constant $c_T>0$ such that
\[
 |W_{\kappa,\xi}(y_\xi,u)|\ge c_T
 \qquad(|u|\le2\delta),
\]
where
\[
 y_{\xi,\tau}=\frac{T}{4\pi|\xi_\tau|}
 \qquad(\tau\in\Sigma_\infty).
\]
The constant $c_T$ is independent of $\kappa$, $\xi$, and $u$.

Let $\mathfrak l_\kappa$ be the translation lattice introduced above. Fourier coefficient extraction on the compact torus $F_\R/\mathfrak l_\kappa$ gives
\[
 \begin{split}
 &c_{\kappa,\xi}(u)W_{\kappa,\xi}(y_\xi,u)\\
 &\qquad=
 \frac{1}{\operatorname{vol}(F_\R/\mathfrak l_\kappa)}
 \int_{F_\R/\mathfrak l_\kappa}
 E_{2,\kappa}(x+iy_\xi,u)
 e^{-2\pi i\Tr(\xi x)}\,dx.
 \end{split}
\]
Hence \eqref{eq:uniform-moderate-growth} implies
\[
 |c_{\kappa,\xi}(u)|
 \le C\bigl(Y_\xi^{B_0-1}+Y_\xi^{-B_0-1}\bigr),
 \qquad
 Y_\xi=\left(\frac{T}{4\pi}\right)^d
 |N_{F/\Q}(\xi)|^{-1}.
\]
We may increase $B_0$ and assume $B_0\ge1$. Since $\mathfrak b_\kappa$ is a fixed fractional ideal, choose $m_\kappa\ge1$ with $m_\kappa\mathfrak b_\kappa\subset\OF$. Then, for $0\ne\xi\in\mathfrak b_\kappa$,
\[
 |N_{F/\Q}(\xi)|\ge m_\kappa^{-d}.
\]
It follows that
\[
 |c_{\kappa,\xi}(u)|
 \le C\bigl(1+|N_{F/\Q}(\xi)|\bigr)^{B_0+1}
 \le C'(1+|\xi|_1)^M
\]
for some $M\ge0$, where the last inequality follows from
$|N_{F/\Q}(\xi)|\le(|\xi|_1/d)^d$.

The coefficient extracted above is holomorphic in $u$. Moreover, by the choice of $T$, $W_{\kappa,\xi}(y_\xi,u)$ is holomorphic and nonzero on the larger disk $|u|\le2\delta$. Hence $c_{\kappa,\xi}(u)$ is holomorphic there. Cauchy's formula on the smaller disk $|u|\le\delta$ gives the same polynomial bound for $\partial_uc_{\kappa,\xi}(u)$. This proves \eqref{eq:fourier-coeff-bounds}.

For (iii), we use Shimura's estimate \cite[(10.11)]{Shimura1985}: uniformly for $(\alpha,\beta)$ in a compact set,
\begin{equation}\label{eq:V-basic-bound}
 |V(t;\alpha,\beta)|\le Ce^{-t/2}(1+t^{-B})
 \qquad(t>0).
\end{equation}
The integral defining $V$ gives, initially for $\Re\beta>0$ and then by analytic continuation,
\[
 t\partial_tV(t;\alpha,\beta)
 =\left(\beta-\frac t2\right)V(t;\alpha,\beta)
 -\beta V(t;\alpha,\beta+1).
\]
Iterating this identity shows that every fixed $t$-derivative is bounded by the right side of \eqref{eq:V-basic-bound}, with possibly a larger power of $t$ and $t^{-1}$. The extra factor $t^{-2}$ when $\xi_\tau<0$ only enlarges that power. Moreover, each application of $D_\tau$ to
$W_{\kappa,\xi}(y,u)e^{2\pi i\Tr(\xi x)}$ produces a factor $y_\tau^{-1}$ times a linear combination of such $t_\tau$-derivatives and of the exponential; after multiplying by $y^a$, all remaining factors are polynomial in the $t_\tau$. Absorbing these polynomials into $e^{-t_\tau/4}$ gives the exponential factor $e^{-\pi T_\xi(y)}$. Finally, the dependence on $u$ is holomorphic on the larger disk, so Cauchy's formula gives the same estimate after one $u$-derivative. This proves \eqref{eq:whittaker-derivative-bounds}.

For (iv), the Fourier expansion of the cusp form $h_\kappa$ has no constant term. Its derivatives have the form
\[
 D^bh_\kappa(z)
 =\sum_{\xi\gg0}
 \xi^b a_\kappa(\xi;h)e^{2\pi i\Tr(\xi x)}
 e^{-2\pi\Tr(\xi y)}.
\]
The Fourier coefficients have polynomial growth, whereas the exponential decays uniformly on a reduced cusp region. Consequently, for every $N>0$,
\[
 y^{(\ell/2)\one+b}|D^bh_\kappa(z)|
 \ll_{b,N}(1+Y)^{-N}.
\]
This is the standard rapid-decay estimate for cusp forms; see also
\cite[Theorem~2.24]{MS} for the general automorphic
rapid-decay theorem on Siegel sets.

We now complete the proof.
\end{proof}

We next fix reduced cusp neighborhoods.  For each component $\Gamma_\lambda\backslash\HH^d$, standard
reduction theory gives finitely many cusps; see
\cite[Chapter~I, Section~2]{Freitag}.  After choosing the cusp charts
$\kappa$ above, there is a height $Y_0\ge1$ such that, up to boundary
sets of measure zero,
\begin{equation}\label{eq:quotient-cusp-decomposition}
 \Gamma_\lambda\backslash\HH^d
 =\mathcal C_\lambda\ \cup\!
 \bigcup_{\lambda(\kappa)=\lambda}\mathcal U_\kappa(Y_0),
\end{equation}
where $\mathcal C_\lambda$ is compact and the sets
$\mathcal U_\kappa(Y_0)$ are pairwise disjoint cusp neighborhoods.
In the coordinates of the cusp chart $\kappa$, a reduced cusp
neighborhood is obtained by reducing the translation variable $x$
modulo the cusp lattice, reducing the logarithmic unit directions to a
fixed compact fundamental parallelepiped, and requiring $Y\ge Y_0$.

More explicitly, write
\[
 \eta_\tau=\log y_\tau-\frac1d\log Y,
 \qquad \sum_\tau\eta_\tau=0.
\]
The vector $(\eta_\tau)_\tau$ lies in the hyperplane
$\sum_\tau\eta_\tau=0$.  The logarithms of the totally positive units
form a lattice in this hyperplane, and our reduction chooses
$(\eta_\tau)_\tau$ in a fixed compact fundamental domain.  Hence there
is a constant $C_\kappa\ge1$ such that
\begin{equation}\label{eq:reduced-cusp-comparison}
 C_\kappa^{-1}Y^{1/d}
 \le y_\tau\le C_\kappa Y^{1/d}
 \qquad(\tau\in\Sigma_\infty)
\end{equation}
throughout $\mathcal U_\kappa(Y_0)$.  We shall use
\eqref{eq:quotient-cusp-decomposition} to estimate the Petersson norm
separately on the compact core and on the cusp neighborhoods.

\begin{lemma}\label{lem:boundary}
For every multi-index $a$, there is $C_a>0$ such that, with $\delta$ as in Lemma~\ref{lem:fourier-bounds},
\begin{equation}\label{eq:uniform-estimate}
 y^{\one+a}|\partial_uD^aE_{2,\kappa}(z,u)|
 \le C_a(1+Y)^2\qquad(|u|\le\delta)
\end{equation}
on every reduced cusp region. If $h\in S_\ell$, $r\ge0$, and
\[
 B_u=[E_2(\cdot,u),h]_r-[E_2,h]_r,
\]
then $B_u$ belongs to the Petersson $L^2$-space of weight $\ell+2r+2$ and
\[
 \|B_u\|_2=O(|u|).
\]
In particular, for every $f\in S_{\ell+2r+2}$,
\[
 \langle f,[E_2(\cdot,u),h]_r\rangle
 \longrightarrow
 \langle f,[E_2,h]_r\rangle
 \qquad(u\to0).
\]
\end{lemma}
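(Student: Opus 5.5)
The plan is to establish the pointwise bound \eqref{eq:uniform-estimate} directly from the Fourier expansion \eqref{eq:eisenstein-fourier}. After that, $B_u$ is written as an integral of $\partial_u$ in the parameter, and the bracket is estimated term by term, with the rapid decay of $h$ from Lemma~\ref{lem:fourier-bounds}(iv) absorbing the polynomial growth of the Eisenstein factor.

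\textbf{Step 1: the estimate \eqref{eq:uniform-estimate}.} Differentiate \eqref{eq:eisenstein-fourier} termwise in $u$ and apply $D^a$.

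For the constant terms, $\partial_u(A_\kappa(u)Y^u)=A_\kappa'Y^u+A_\kappa Y^u\log Y$, and similarly for $B_\kappa(u)Y^{-1-u}$. Each $D_\tau$ applied to $Y^{s}$ produces a factor $s/(4\pi i y_\tau)$. Multiplying by $y^{\one+a}$ leaves bounded multiples of $Y^{1+u}(1+|\log Y|)$ and $Y^{-u}(1+|\log Y|)$. Since $|u|\le\delta<1/8$ and $Y\ge1$ on a reduced cusp region, Lemma~\ref{lem:fourier-bounds}(i) bounds these by $C(1+Y)^2$.

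For the nonconstant part, combine \eqref{eq:fourier-coeff-bounds} and \eqref{eq:whittaker-derivative-bounds} with the extra factor $y^{\one}\le C Y$. The sum to control is $\sum_{\xi\ne0}(1+|\xi|_1)^M e^{-\pi T_\xi(y)}\prod_\tau(1+(|\xi_\tau|y_\tau)^{-M_a})$. On the reduced region, \eqref{eq:reduced-cusp-comparison} gives $y_\tau\asymp Y^{1/d}\ge C^{-1}$, so $T_\xi(y)\gg|\xi|_1$. Also $|\xi_\tau|y_\tau$ is bounded below for all but the small-$|\xi_\tau|$ terms. I would handle these using $|N(\xi)|\ge m_\kappa^{-d}$: it forces $\prod_\tau|\xi_\tau|y_\tau\gg Y$, and crudely each factor $(|\xi_\tau|y_\tau)^{-M_a}$ is at most $(|\xi|_1 y_\tau)^{(d-1)M_a}$ up to constants. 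This yields polynomial factors in $|\xi|_1$, which are killed by $e^{-c|\xi|_1}$, and the lattice sum converges to a bounded quantity. This step, keeping the small-$|\xi_\tau|$ factors under control uniformly, is where I expect the main obstacle. Crude bookkeeping should suffice, since only a polynomial bound in $Y$ is needed.

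\textbf{Step 2: the $L^2$ estimate on cusp neighborhoods.} Write $B_u=\int_0^1\frac{d}{ds}[E_2(\cdot,su),h]_r\,ds$, which is linear in $\partial_uE_2$ times $u$. On each cusp neighborhood $\mathcal U_\kappa(Y_0)$, slash by $\sigma_\kappa$; the weight-$(\ell+2r+2)$ Petersson density is invariant under this. Each term of the bracket has the form $(D^t\partial_uE_{2,\kappa})(D^{r\one-t}h_\kappa)$. Multiplying by $y^{(\ell+2r+2)\one/2}$ and splitting the powers of $y$ as $y^{\one+t}\cdot y^{(\ell/2)\one+r\one-t}$, Step~1 and \eqref{eq:cusp-decay} bound the term by $C|u|(1+Y)^2(1+Y)^{-N}$. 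The square of this is integrable against $d\mu$ over the reduced region: the $x$-torus and unit directions are compact, and $\int_{Y_0}^\infty Y^{-2}\,dY$ converges after taking $N$ large.

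\textbf{Step 3: the compact core and the conclusion.} On the compact pieces $\mathcal C_\lambda$, use that $E_2(z,u)$ and its $z$-derivatives are jointly smooth, and holomorphic in $u$ on $|u|\le2\delta$ by Lemma~\ref{lem:fourier-bounds}. Hence $\partial_uD^aE_{2,\lambda}$ is uniformly bounded there, giving again $O(|u|)$. Summing over the finitely many $\lambda$ and $\kappa$ via \eqref{eq:quotient-cusp-decomposition} gives $\|B_u\|_2=O(|u|)$; in particular $B_u$ lies in the $L^2$ space. Finally, $f$ is in $L^2$ since it is cuspidal, so Cauchy--Schwarz gives $|\langle f,B_u\rangle|\le\|f\|_2\|B_u\|_2\to0$. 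This is the stated convergence.
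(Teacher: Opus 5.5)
Your proposal follows the paper's proof essentially step for step. It bounds the constant and nonconstant Fourier parts separately via Lemma~\ref{lem:fourier-bounds}, then uses the fundamental theorem of calculus in $u$, Rankin--Cohen covariance with the cusp decay \eqref{eq:cusp-decay} on the cusp neighborhoods, compactness on the core, and Cauchy--Schwarz. The one point to tighten is Step~1: your crude bound $(|\xi_\tau|y_\tau)^{-M_a}\ll(|\xi|_1y_\tau)^{(d-1)M_a}$ creates a factor of size $Y^{(d-1)M_a}$ that the $Y$-independent estimate $T_\xi(y)\gg|\xi|_1$ cannot absorb, so as written you only get some polynomial bound in $Y$ (enough for Steps~2--3, but not the stated $(1+Y)^2$); fix it either by using $T_\xi(y)\ge C_\kappa^{-1}Y^{1/d}|\xi|_1$, which makes the whole nonconstant part $O(e^{-c'Y^{1/d}})$ as in the paper, or by the sharper bound $(|\xi_\tau|y_\tau)^{-1}\le m_\kappa^d|\xi|_1^{d-1}/y_\tau\ll|\xi|_1^{d-1}$.
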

\begin{proof}
We first prove \eqref{eq:uniform-estimate}.  Write
\[
 E_{2,\kappa}(z,u)
 =A_\kappa(u)Y^u+B_\kappa(u)Y^{-1-u}
 +\mathcal N_\kappa(z,u),
\]
where $\mathcal N_\kappa$ is the nonconstant Fourier part of
\eqref{eq:eisenstein-fourier}.

For the constant part, \eqref{eq:constant-coeff-bounds} gives, for every
fixed multi-index $a$,
\begin{equation}\label{eq:boundary-constant-part}
 y^{\one+a}
 \left|\partial_uD^a
 \bigl(A_\kappa(u)Y^u+B_\kappa(u)Y^{-1-u}\bigr)\right|
 \le C_a(1+Y)^2.
\end{equation}
Indeed, $D^aY^s$ is a fixed polynomial in $s$ times
$y^{-a}Y^s$, and one $u$-derivative introduces at most one factor
$\log Y$; for $|u|\le\delta$ and $Y\ge1$ these factors are absorbed by
$(1+Y)^2$.

For the nonconstant part, put
\[
 F_{\kappa,\xi}(z,u)
 =c_{\kappa,\xi}(u)W_{\kappa,\xi}(y,u)
  e^{2\pi i\Tr(\xi x)}.
\]
The coefficient estimate \eqref{eq:fourier-coeff-bounds}, the
Whittaker estimate \eqref{eq:whittaker-derivative-bounds}, and the
product rule give, for some $L\ge0$,
\begin{equation}\label{eq:boundary-single-fourier-term}
 \begin{split}
 &y^{\one+a}\Bigl(
 |D^aF_{\kappa,\xi}(z,u)|
 +|\partial_uD^aF_{\kappa,\xi}(z,u)|\Bigr)\\
 &\qquad\le
 CY(1+|\xi|_1)^L e^{-\pi T_\xi(y)}
 \prod_\tau\bigl(1+(|\xi_\tau|y_\tau)^{-L}\bigr).
 \end{split}
\end{equation}
On $\mathcal U_\kappa(Y_0)$, \eqref{eq:reduced-cusp-comparison} gives
\[
 T_\xi(y)=\sum_\tau|\xi_\tau|y_\tau
 \ge C_\kappa^{-1}Y^{1/d}|\xi|_1.
\]
Moreover, since $\mathfrak b_\kappa$ is a fixed fractional-ideal
lattice, there is $m_\kappa\ge1$ with
$m_\kappa\mathfrak b_\kappa\subset\OF$.  Thus for
$0\ne\xi\in\mathfrak b_\kappa$,
\[
 |N_{F/\Q}(\xi)|\ge m_\kappa^{-d},
 \qquad
 |\xi_\tau|^{-1}
 \le m_\kappa^d(1+|\xi|_1)^{d-1}.
\]
Together with $Y\ge1$ and \eqref{eq:reduced-cusp-comparison}, this
absorbs the product in \eqref{eq:boundary-single-fourier-term} into a
fixed polynomial in $1+|\xi|_1$.  Hence
\begin{equation}\label{eq:nonconstant-majorant}
 \begin{split}
 &y^{\one+a}\sum_{0\ne\xi\in\mathfrak b_\kappa}
 \Bigl(|D^aF_{\kappa,\xi}(z,u)|
 +|\partial_uD^aF_{\kappa,\xi}(z,u)|\Bigr)\\
 &\qquad\le
 CY\sum_{0\ne\xi\in\mathfrak b_\kappa}
 (1+|\xi|_1)^{L'}e^{-cY^{1/d}|\xi|_1}
 \le C'e^{-c'Y^{1/d}}.
 \end{split}
\end{equation}
For the last inequality, use that the nonzero lattice has a positive
minimum of $|\xi|_1$ and that the number of lattice points with
$n\le|\xi|_1<n+1$ is $O((n+1)^d)$.  Thus both the Fourier series and
its $u$-derivative converge normally for $|u|\le\delta$, so termwise
$u$-differentiation is valid.  Combining
\eqref{eq:boundary-constant-part} and
\eqref{eq:nonconstant-majorant} proves
\eqref{eq:uniform-estimate}.

Now put $\Delta_u=E_2(\cdot,u)-E_2$. For each cusp chart $\kappa$, let $\Delta_{u,\kappa}(z)=E_{2,\kappa}(z,u)-E_{2,\kappa}(z,0)$.
Holomorphy in $u$ and the fundamental theorem of calculus give
\begin{equation*}
 D^a\Delta_{u,\kappa}(z)
 =u\int_0^1\partial_uD^aE_{2,\kappa}(z,tu)\,dt,
\end{equation*}
so \eqref{eq:uniform-estimate} implies
\begin{equation}\label{eq:difference-estimate}
 y^{\one+a}|D^a\Delta_{u,\kappa}(z)|
 \le C_a|u|(1+Y)^2.
\end{equation}

Let
\[
 m=\ell+2r+2,
 \qquad
 B_u=[E_2(\cdot,u),h]_r-[E_2,h]_r.
\]
For a cusp chart $\kappa$ belonging to the component
$\lambda(\kappa)$, define the expression of $B_u$ in that chart by
\begin{equation*}
 B_{u,\kappa}
 :=B_{u,\lambda(\kappa)}\slashk{m}\sigma_\kappa.
\end{equation*}
Rankin--Cohen covariance gives
\begin{equation*}
 B_{u,\kappa}
 =[E_{2,\kappa}(\cdot,u),h_\kappa]_r
  -[E_{2,\kappa}(\cdot,0),h_\kappa]_r
 =[\Delta_{u,\kappa},h_\kappa]_r.
\end{equation*}
Every term in the last bracket is a constant multiple of
$(D^a\Delta_{u,\kappa})(D^{r\one-a}h_\kappa)$.  Note that 
$(\one+a)+\bigl((\ell/2)\one+r\one-a\bigr)
 =\frac m2\one. $
\eqref{eq:difference-estimate} and the cusp-form decay 
\eqref{eq:cusp-decay} then imply, for every $N>0$,
\begin{equation*}
 Y^{m/2}|B_{u,\kappa}(z)|
 \le C_N|u|(1+Y)^{2-N}
 \qquad(z\in\mathcal U_\kappa(Y_0)).
\end{equation*}
Therefore, choosing $N>2$,
\begin{equation}\label{eq:cusp-L2-bound}
 \int_{\mathcal U_\kappa(Y_0)}
 |B_{u,\kappa}(z)|^2Y^m\,d\mu(z)
 \le C|u|^2
 \int_{\mathcal U_\kappa(Y_0)}(1+Y)^{4-2N}\,d\mu(z)
 \ll |u|^2.
\end{equation}
The last integral is finite because the cusp neighborhood has finite
invariant volume and $(1+Y)^{4-2N}\le1$ for $N>2$.

It remains to estimate the compact core. Choose compact sets of representatives
\(\widetilde{\mathcal C}_\lambda\subset\HH^d\) for the compact subsets
\(\mathcal C_\lambda\) in \eqref{eq:quotient-cusp-decomposition}. Since
\(B_{u,\lambda}(z)\) is smooth in \(z\), holomorphic in \(u\) near \(u=0\),
and \(B_{0,\lambda}=0\), compactness gives

$$
 \sup_{z\in\widetilde{\mathcal C}_\lambda}
 |B_{u,\lambda}(z)|\ll |u|
 \qquad (|u|\le\delta).
$$

Moreover, \(Y^m\) is bounded on each
\(\widetilde{\mathcal C}_\lambda\), and the compact core has finite volume.
Hence
\begin{equation}\label{eq:compact-L2-bound}
\sum_{\lambda=1}^{h_F^+}
\int_{\mathcal C_\lambda}
|B_{u,\lambda}(z)|^2Y^m,d\mu(z)
\ll |u|^2.
\end{equation}

Finally, the Petersson norm is invariant under the cusp changes of
coordinates, and \eqref{eq:quotient-cusp-decomposition} gives, up to
sets of measure zero,
\begin{equation*}
 \begin{split}
 \|B_u\|_2^2
 ={}&\sum_{\lambda=1}^{h_F^+}
 \int_{\mathcal C_\lambda}
 |B_{u,\lambda}(z)|^2Y^m\,d\mu(z)\\
 &+\sum_{\kappa}
 \int_{\mathcal U_\kappa(Y_0)}
 |B_{u,\kappa}(z)|^2Y^m\,d\mu(z).
 \end{split}
\end{equation*}
Combining \eqref{eq:cusp-L2-bound} and
\eqref{eq:compact-L2-bound}, and using that there are only finitely
many cusp charts, yields
\[
 \|B_u\|_2^2\ll |u|^2,
 \qquad
 \|B_u\|_2\ll |u|.
\]
Finally, Cauchy--Schwarz gives
\[
 |\langle f,B_u\rangle|
 \le\|f\|_2\|B_u\|_2\longrightarrow0.
\]
\end{proof}

We unfold the Petersson pairing. For $k,\ell>2$, the corresponding
arbitrary narrow class number calculation is
\cite[Proposition~4.4 and Corollary~4.5]{MZZ}; for $h_F^+=1$ and
$k\ge4$, see also \cite[Lemma~3.2]{ZZ}. To treat $k=2$, we unfold
the Eisenstein family in the half-plane $\Re(u)>0$ and then let
$u\to0$. The class factors cancel in the calculation below. For $(u)_0^\downarrow=1$ and
\[
 (u)_t^\downarrow=u(u-1)\cdots(u-t+1)\qquad(t>0),
\]
put
\begin{equation}\label{eq:gamma-factor}
 G_{k,\ell,r}(u)=\frac1{(4\pi)^{K-1+u}}\sum_{t=0}^r
 \binom{k+r-1}{r-t}\binom{\ell+r-1}{t}(u)_t^\downarrow\Gamma(K-1+u-t).
\end{equation}
For a multi-index $t=(t_\tau)_\tau$, we also use
\begin{equation*}
 C_{k,\ell,r}(t)=\prod_\tau\binom{k+r-1}{r-t_\tau}\binom{\ell+r-1}{t_\tau},
 \qquad(u)_t^\downarrow=\prod_\tau(u)_{t_\tau}^\downarrow.
\end{equation*}

\begin{theorem}[Theorem A]\label{thm:pairing}
Let $f\in S_K$ and $h\in S_\ell$. For $\Re(u)>1-k/2$,
\begin{equation}\label{eq:deformed-identity}
 \langle f,[E_k(\cdot,\overline u),h]_r\rangle
 =
 D_F^{-1/2}G_{k,\ell,r}(u)^d
 \Lcl(K-r-1+u;f,h).
\end{equation}
For $k=2$, this identity extends to $u=0$.

If $f\in S_K(\omega_f)$ and $h\in S_\ell(\omega_h)$ are normalized eigentuples, then
\begin{equation}\label{eq:petersson-automorphic}
 \langle f,[E_k,h]_r\rangle
 =
 \CF\,
 \frac{L^S(k/2,\Pi(f)\times\Pi(h)^\vee)}
      {L_F(k,\omega_f\omega_h^{-1})}
 \ne0.
\end{equation}
\end{theorem}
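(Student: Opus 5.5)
The proof has three steps: unfold the Petersson pairing in the convergent region, pass to the boundary $u=0$ when $k=2$, and then translate the classical series into the automorphic $L$-function.

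\textbf{Step 1: unfolding.} First take $\Re(u)$ large, and then continue analytically to $\Re(u)>1-k/2$. Expand the bracket $[E_k(\cdot,\overline u),h]_r$ by its definition as a sum over multi-indices $t$, with terms $(D^{t}E_k)(D^{r\one-t}h)$. A cleaner route is to rewrite the pairing using the adjointness of Rankin--Cohen brackets with Poincaré-type series. Concretely, $\langle f,[E_k(\cdot,\overline u),h]_r\rangle$ equals a sum of $\lambda$-components. In each component the Eisenstein sum over $\Gamma_{\infty,\lambda}\backslash\Gamma_\lambda$ is moved onto the domain, using that $f\,\overline{[\,\cdot\,,h]_r}\,Y^K$ is $\Gamma_\lambda$-invariant and that the bracket is covariant. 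This gives an integral over $\Gamma_{\infty,\lambda}\backslash\HH^d$ of
\[
f_\lambda\,\overline{\sum_t(-1)^{|t|}C_{k,\ell,r}(t)\,D^t(Y^{\overline u})\,D^{r\one-t}h_\lambda}\;Y^K,
\]
weighted by $q_\lambda^{-k/2-u}$.

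Next use $D_\tau^{t}(y_\tau^{u})=(-4\pi)^{-t}(u)^\downarrow_t\,y_\tau^{u-t}$ up to sign. Integrate $x$ over $F_\R/\ttt_\lambda^{-1}\df^{-1}$ and the unit directions; this collapses the pairing to a diagonal sum over $\xi\in\ttt_\lambda$ with $\xi\gg0$ modulo $\mathcal O_F^{\times,+}$. The $x$-volume produces the factor $D_F^{1/2}N(\ttt_\lambda)^{-1}$. Each $y_\tau$-integral is a Gamma integral
\[
\int_0^\infty e^{-4\pi\xi_\tau y}\,y^{K-2+u-t_\tau}\,dy .
\]
Collecting the factors gives $G_{k,\ell,r}(u)^d\,N(\xi)^{-(K-1+u)}\xi^{r\one}$. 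Converting $a_\lambda(\xi;\cdot)$ to $c(\ideal,\cdot)$ through $q_\lambda^{K/2}$, $q_\lambda^{\ell/2}$ and the normalization $q_\lambda^{-k/2-u}$ makes all powers of $N(\ttt_\lambda)$ cancel. The result is $N(\ideal)^{-(K-r-1+u)}$, summed over all ideals as $\lambda$ varies, which is exactly \eqref{eq:deformed-identity}.

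The point needing care is the bookkeeping of the class factors and of $D_F^{-1/2}$. For $k,\ell>2$ this matches \cite{MZZ}; for general $k\ge2$ it is the same computation for $\Re(u)>1$. Analytic continuation to $\Re(u)>1-k/2$ then uses two facts: the left side is holomorphic there by the locally uniform convergence of \eqref{eq:eisenstein-family}, and the right side is holomorphic because $K-r-1+u$ lies in the region \eqref{eq:holomorphy-region} of Proposition~\ref{prop:classical-RS} exactly when $\Re(u)>1-k/2$.

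\textbf{Step 2: the case $k=2$.} Here the identity holds on $0<\Re(u)$, and also for small $u$ in that half-plane. Let $u\to0$ through this region. On the left, Lemma~\ref{lem:boundary}, applied with $\overline u\to0$, gives convergence to $\langle f,[E_2,h]_r\rangle$. On the right, $G$ is continuous at $0$. Moreover $\Lcl(K-r-1+u;f,h)$ is holomorphic at $u=0$ by \eqref{eq:holomorphy-region}, since $K-r-1=(K+\ell)/2$ exceeds $(K+\ell-1)/2$. So the identity holds at $u=0$.

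\textbf{Step 3: eigentuples.} For normalized eigentuples set $u=0$ and $s=K-r-1$, so $w=s-(K+\ell-2)/2=k/2$. Proposition~\ref{prop:classical-RS} then gives
\[
\Lcl=\frac{L^S(k/2,\Pi(f)\times\Pi(h)^\vee)}{L_F(k,\omega_f\omega_h^{-1})}.
\]
At $u=0$ the terms with $t\ne0$ vanish because $(0)^\downarrow_t=0$. Hence
\[
G_{k,\ell,r}(0)=\binom{k+r-1}{r}\frac{(K-2)!}{(4\pi)^{K-1}},
\]
so the constant is $\CF$.

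For nonvanishing: the denominator $L_F(k,\cdot)$ is finite and nonzero by its Euler product, since $k\ge2>1$. When $k\ge4$, $w=k/2>1$, so the numerator is an absolutely convergent Euler product with nonzero factors. When $k=2$, $w=1$, and \eqref{eq:boundary-value} of Proposition~\ref{prop:analytic-application} applies, since $K>\ell$. I expect the main obstacle to be the exact constant and normalization bookkeeping in the unfolding with unit groups and class representatives; the analytic inputs are already supplied by Lemma~\ref{lem:boundary} and Proposition~\ref{prop:analytic-application}.
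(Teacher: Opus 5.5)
Your proposal is essentially the paper's proof. It uses the same unfolding against $Y^u$, with the Mellin integrals producing $G_{k,\ell,r}(u)^d$ and the $q_\lambda$-powers collapsing to $N(\ideal)^{-(K-r-1+u)}$. It passes to $u=0$ for $k=2$ in the same way, via Lemma~\ref{lem:boundary} together with holomorphy of $\Lcl$ at $s=(K+\ell)/2>(K+\ell-1)/2$. It then converts to $L^S(k/2,\cdot)/L_F(k,\cdot)$ and takes nonvanishing from Proposition~\ref{prop:analytic-application}, as the paper does. The one procedural difference is that the paper unfolds directly on all of $\Re(u)>1-k/2$, using absolute convergence of the Poincar\'e series and of $\Lcl$ there. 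You instead unfold for $\Re(u)>1$ and continue analytically; either works.

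One bookkeeping slip needs fixing, at exactly the point you flagged. The lattice $\ttt_\lambda^{-1}\df^{-1}$ has covolume $N(\ttt_\lambda)^{-1}N(\df)^{-1}D_F^{1/2}=D_F^{-1/2}q_\lambda$, because $N(\df)=D_F$. The factor $D_F^{1/2}N(\ttt_\lambda)^{-1}$ you wrote would give $D_F^{1/2}$ in the final identity, not the $D_F^{-1/2}$ you assert. The dual pairing of $\df^{-1}$ with $\OF$ is exactly where the $D_F^{-1/2}$ comes from.

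A smaller inaccuracy: $K-r-1+u$ lies in the region $\Re(s)>(K+\ell-1)/2$ of Proposition~\ref{prop:classical-RS} if and only if $\Re(u)>(1-k)/2$. It is not ``exactly when'' $\Re(u)>1-k/2$. The threshold that matches $1-k/2$ is the absolute-convergence line $\Re(s)>(K+\ell)/2$, which is the one the paper uses. This does not affect your continuation argument, since $1-k/2>(1-k)/2$.
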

\begin{proof}
The conjugate parameter is used because the Petersson pairing is
conjugate-linear in its second variable. After complex conjugation in
the integral, the left-hand side of \eqref{eq:deformed-identity} is
holomorphic in $u$, and the unfolded seed is $Y^u$.

Ramanujan--Petersson, applied after expansion in cuspidal eigenbases,
gives, for every $\varepsilon>0$,
\[
 |c(\ideal,f)c(\ideal,h)|
 \ll_\varepsilon
 N(\ideal)^{(K+\ell-2)/2+\varepsilon}.
\]
Note that $\Re(K-r-1+u)>(K+\ell)/2$ is equivalent to $\Re(u)>1-k/2$.
The series $\Lcl(K-r-1+u;f,h)$ converges absolutely in this
half-plane. The Poincar\'e series \eqref{eq:eisenstein-family},
together with the required $z$-derivatives, is also absolutely
convergent there. The rapid decay of the cusp forms and their
derivatives therefore justifies the unfolding and all termwise
integrations below.

Put $F_{\mathbb R}=F\otimes_{\mathbb Q}\mathbb R$ and $L_\lambda=\mathfrak t_\lambda^{-1}\mathfrak d_F^{-1}$.
We first describe the quotient by the stabilizer of infinity. If
\[
 \gamma=
 \begin{pmatrix}
  a&b\\0&d
 \end{pmatrix}
 \in\Gamma_{\infty,\lambda},
\]
then $ad\in\mathcal O_F^{\times,+}$. Hence
$a,d\in\mathcal O_F^\times$, and, modulo the scalar matrix $dI_2$,
\[
 \gamma\sim
 \begin{pmatrix}
  \varepsilon&\beta\\0&1
 \end{pmatrix},
 \qquad
 \varepsilon=\frac ad\in\mathcal O_F^{\times,+},
 \qquad
 \beta=\frac bd\in L_\lambda.
\]
Scalar matrices act trivially on $\mathbb H^d$, so the effective
action of $\Gamma_{\infty,\lambda}$ is generated by
\[
 z\longmapsto z+\beta\quad(\beta\in L_\lambda),
 \qquad
 z\longmapsto\varepsilon z
 \quad(\varepsilon\in\mathcal O_F^{\times,+}).
\]
Since $\varepsilon L_\lambda=L_\lambda$, choose a fundamental
parallelepiped for $L_\lambda$ in $F_{\mathbb R}$ and a fundamental
domain $\mathcal F_\lambda$ for the action of
$\mathcal O_F^{\times,+}$ on $\mathbb R_{>0}^d$. Up to boundary
sets of measure zero,
\[
 (F_{\mathbb R}/L_\lambda)\times\mathcal F_\lambda
\]
is a fundamental domain for
$\Gamma_{\infty,\lambda}\backslash\mathbb H^d$.

We now unfold the Poincar\'e series
\eqref{eq:eisenstein-family}. Rankin--Cohen covariance replaces the
integral over $\Gamma_\lambda\backslash\mathbb H^d$ together with
the sum over
$\Gamma_{\infty,\lambda}\backslash\Gamma_\lambda$ by an integral
over $\Gamma_{\infty,\lambda}\backslash\mathbb H^d$. Hence
\[
\begin{split}
 &\langle f,[E_k(\cdot,\overline u),h]_r\rangle\\
 &=
 \sum_{\lambda=1}^{h_F^+}
 \sum_{t\in\{0,\ldots,r\}^{\Sigma_\infty}}
 q_\lambda^{-k/2-u}
 C_{k,\ell,r}(t)
 \frac{(u)_t^\downarrow}{(4\pi)^{|t|}}\\
 &\qquad\times
 \int_{(F_{\mathbb R}/L_\lambda)\times\mathcal F_\lambda}
 f_\lambda(z)\,
 \overline{D^{r\one-t}h_\lambda(z)}
 \prod_\tau y_\tau^{K+u-t_\tau-2}\,dx\,dy.
\end{split}
\]
Indeed,
\[
 D_\tau^{t_\tau}y_\tau^u
 =
 (-1)^{t_\tau}
 \frac{(u)_{t_\tau}^\downarrow}{(4\pi)^{t_\tau}}
 y_\tau^{u-t_\tau},
\]
and these signs cancel those in the Rankin--Cohen bracket.

Insert the Fourier expansions of $f_\lambda$ and $h_\lambda$.
The $x$-integration gives
\[
 \int_{F_{\mathbb R}/L_\lambda}
 e^{2\pi i\Tr((\xi-\eta)x)}\,dx
 =
 D_F^{-1/2}q_\lambda\,\delta_{\xi,\eta},
\]
since the dual lattice of $L_\lambda$ is $\mathfrak t_\lambda$.
The simultaneous action of $\mathcal O_F^{\times,+}$ on the
surviving index $\xi$ and on $y$ unfolds $\mathcal F_\lambda$ to
$\mathbb R_{>0}^d$. For each real place, the remaining Mellin
integral is
\[
 \frac{\xi^{r-t}}{(4\pi)^t}
 \int_0^\infty
 e^{-4\pi\xi y}y^{K+u-t-2}\,dy
 =
 \frac{\Gamma(K-1+u-t)}
 {(4\pi)^{K-1+u}}
 \xi^{-(K-r-1+u)}.
\]
The sum over the multi-index $t$ is therefore the product, over the
$d$ real places, of the one-variable sum in
\eqref{eq:gamma-factor}. We obtain
\begin{equation}\label{eq:classwise-sum}
\begin{split}
 &\langle f,[E_k(\cdot,\overline u),h]_r\rangle\\
 &\quad=
 D_F^{-1/2}G_{k,\ell,r}(u)^d
 \sum_{\lambda=1}^{h_F^+}q_\lambda^{1-k/2-u}
 \sum_{\substack{
   \xi\in\mathfrak t_\lambda/
        \mathcal O_F^{\times,+}\\
   \xi\gg0}}
 \frac{
  a_\lambda(\xi;f)\overline{a_\lambda(\xi;h)}
 }{N(\xi)^{K-r-1+u}}.
\end{split}
\end{equation}

For
$\ideal=(\xi)\mathfrak t_\lambda^{-1}$,
\[
 N(\ideal)=q_\lambda N(\xi),
 \qquad
 a_\lambda(\xi;f)\overline{a_\lambda(\xi;h)}
 =
 q_\lambda^{-(K+\ell)/2}
 c(\ideal,f)\overline{c(\ideal,h)}.
\]
Note that $1-k/2-(K+\ell)/2=-(K-r-1)$.
Thus the total power of $q_\lambda$ in each term of
\eqref{eq:classwise-sum} is
$q_\lambda^{-(K-r-1+u)}$. Every nonzero integral ideal occurs
exactly once, and hence
\eqref{eq:deformed-identity} follows. In particular, no factor
$h_F^+$ or $(h_F^+)^{-1}$ occurs.

If $k>2$, then $u=0$ lies in the half-plane
$\Re(u)>1-k/2$, so we may set $u=0$ directly.
Suppose now that $k=2$. Then $u=0$ is the boundary of this
half-plane. Lemma~\ref{lem:boundary} gives
\[
 \|[E_2(\cdot,\overline u),h]_r-[E_2,h]_r\|_2
 \longrightarrow0
 \qquad(u\to0),
\]
and therefore
\[
 \langle f,[E_2(\cdot,\overline u),h]_r\rangle
 \longrightarrow
 \langle f,[E_2,h]_r\rangle.
\]

On the other hand, we have
\[
 K-r-1
 =
 \ell+r+1
 >
 \ell+r+\frac12
 =
 \frac{K+\ell-1}{2}.
\]
Hence Proposition~\ref{prop:classical-RS} shows that
$\Lcl(K-r-1+u;f,h)$ is holomorphic at $u=0$. The function
$G_{2,\ell,r}(u)$ is holomorphic there as well. Thus
\eqref{eq:deformed-identity} extends to $u=0$.

Note that
\[
 G_{k,\ell,r}(0)
 =
 \binom{k+r-1}{r}
 \frac{\Gamma(K-1)}{(4\pi)^{K-1}}
 =
 \binom{k+r-1}{r}
 \frac{(K-2)!}{(4\pi)^{K-1}}.
\]
We now deduce
\[
 \langle f,[E_k,h]_r\rangle
 =
 \CF\,\Lcl(K-r-1;f,h).
\]

Assume now that $f\in S_K(\omega_f)$ and $h\in S_\ell(\omega_h)$ are normalized eigentuples. In \eqref{eq:global-comparison}, put $s=K-r-1$ and $w=s-(K+\ell-2)/2=k/2$.
We obtain
\[
 L_F(k,\omega_f\omega_h^{-1})\Lcl(K-r-1;f,h)
 =L^S(k/2,\Pi(f)\times\Pi(h)^\vee),
\]
and therefore \eqref{eq:petersson-automorphic}.

For $k>2$, both Euler products in \eqref{eq:petersson-automorphic} are absolutely convergent and nonzero. For $k=2$, Proposition~\ref{prop:analytic-application} gives
\[
 L^S(1,\Pi(f)\times\Pi(h)^\vee)\in\C^\times,
\]
while $L_F(2,\omega_f\omega_h^{-1})$ is absolutely convergent and nonzero. Hence the Petersson pairing is nonzero in this case as well. This proves the theorem.
\end{proof}

\begin{remark}
For $k>2$, M.~Zhang and Y.~Zhang proved the identity for arbitrary level and arbitrary narrow class number in \cite[Proposition~4.4 and Corollary~4.5]{MZZ}. 
\end{remark}
\begin{corollary}\label{cor:weight-two}
Let $h\in S_\ell(\omega_h)$ and $f\in S_{\ell+2r+2}(\omega_f)$ be normalized eigentuples. Then
\begin{equation*}
 \langle f,[E_2,h]_r\rangle
 =D_F^{-1/2}\left(\frac{(r+1)(\ell+2r)!}{(4\pi)^{\ell+2r+1}}\right)^d
 \frac{L^S(1,\Pi(f)\times\Pi(h)^\vee)}
 {L_F(2,\omega_f\omega_h^{-1})}\ne0.
\end{equation*}
\end{corollary}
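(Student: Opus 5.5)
This corollary is the case $k=2$ of Theorem~\ref{thm:pairing}, so the plan is to specialize \eqref{eq:petersson-automorphic} and simplify the constant $\CF$. We set $k=2$, so $K=\ell+2r+2$, which matches the weight of $f$ in the statement. The hypotheses of Theorem~\ref{thm:pairing} then hold: $k=2$ and $\ell\ge2$ are even, $r\ge0$, and $f,h$ are normalized full-level eigentuples. The theorem gives
\[
 \langle f,[E_2,h]_r\rangle
 =\mathcal C_{F,2,\ell,r}\,
 \frac{L^S(1,\Pi(f)\times\Pi(h)^\vee)}{L_F(2,\omega_f\omega_h^{-1})}\ne0,
\]
since $k/2=1$. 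The nonvanishing comes from the boundary case of the theorem: Lemma~\ref{lem:boundary} passes to the limit $u\to0$, and Proposition~\ref{prop:analytic-application} applies because $K>\ell$ and gives nonvanishing at $s=1$. No new analytic input is needed.

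The remaining step is the arithmetic of the constant. With $k=2$, the binomial coefficient is $\binom{k+r-1}{r}=\binom{r+1}{r}=r+1$. We also have $K-2=\ell+2r$ and $K-1=\ell+2r+1$. Substituting into the definition of $\CF$ gives
\[
 \mathcal C_{F,2,\ell,r}
 =D_F^{-1/2}\left(\frac{(r+1)(\ell+2r)!}{(4\pi)^{\ell+2r+1}}\right)^d,
\]
which is exactly the displayed factor.

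There is no real obstacle here. The only point to check is that the $u=0$ extension for $k=2$ applies to arbitrary normalized eigentuples with possibly distinct central characters $\omega_f$ and $\omega_h$. It does: Theorem~\ref{thm:pairing} is stated for $f\in S_K(\omega_f)$ and $h\in S_\ell(\omega_h)$ in general. In addition, $L_F(2,\omega_f\omega_h^{-1})$ is a convergent, nonzero Euler product because $2>1$.
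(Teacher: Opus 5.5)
Your proposal is correct and follows the paper's proof: the corollary is the $k=2$ case of \eqref{eq:petersson-automorphic}. The constant simplifies via $\binom{r+1}{r}=r+1$, $K-2=\ell+2r$ and $K-1=\ell+2r+1$, and nonvanishing comes from Proposition~\ref{prop:analytic-application} together with the absolutely convergent nonzero Euler product for $L_F(2,\omega_f\omega_h^{-1})$.
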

\begin{proof}
This is \eqref{eq:petersson-automorphic} with $k=2$. Proposition~\ref{prop:analytic-application} gives $L^S(1,\Pi(f)\times\Pi(h)^\vee)\in\C^\times$, while $L_F(2,\omega_f\omega_h^{-1})$ is represented by an absolutely convergent nonzero Euler product.
\end{proof}

\subsection{Non-eigenform criterion and product classification}\label{subsec:classification}
We apply Theorem~\ref{thm:pairing} and Corollary~\ref{cor:weight-two} to the product problem considered in \cite{MZZ,ZZ,HQZ}. By a product identity we mean an equality
\[
 P=cGH,
\]
where $P,G,H$ are nonzero normalized simultaneous Hecke eigenforms, $c\in\C^\times$, and the weight of $P$ is the sum of the weights of $G$ and $H$. Equivalently, the pointwise product $GH$ is a Hecke eigenform, and $c$ is chosen so that $c(\OF,P)=1$.

For even $k>2$ and arbitrary narrow class number, Shimura constructs
a Hecke-normalized Eisenstein eigentuple, which we denote by
$E_k^{\mathrm{eig}}$, characterized by
\[
 c(\mathfrak a,E_k^{\mathrm{eig}})
 =
 \sigma_{k-1}(\mathfrak a)
 :=
 \sum_{\mathfrak r\mid\mathfrak a}N(\mathfrak r)^{k-1},
 \qquad
 c(\OF,E_k^{\mathrm{eig}})=1;
\]
see \cite{Shimura1978}. This eigentuple should be distinguished from
the Poincar\'e-normalized Eisenstein tuple $E_k$ used in
Theorem~\ref{thm:pairing}. When $h_F^+=1$, comparison of the Fourier
coefficients gives
\begin{equation}\label{eq:Eisenstein-normalization}
 E_k^{\mathrm{eig}}
 =
 2^{-d}\zeta_F(1-k)E_k.
\end{equation}
For $k=2$ and $h_F^+=1$, we use the same normalization and set
\[
  E_2^{\mathrm{eig}}
  :=2^{-d}\zeta_F(-1)E_2.
\]
Thus $c(\OF,E_2^{\mathrm{eig}})=1$.

\begin{corollary}\label{cor:projected-noneigenform}
Let $h\in S_\ell(\omega_h)$ be a normalized full-level cuspidal eigentuple and let $\chi$ be a narrow ideal class character. Put $K=k+\ell+2r$. If $\dim S_K(\chi\omega_h)>1$, then $[P_\chi E_k,h]_r$ is nonzero and is not a simultaneous Hecke eigentuple.
\end{corollary}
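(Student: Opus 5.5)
The plan is the orthogonality argument recalled in the introduction, carried out inside the single central-character subspace $S_K(\chi\omega_h)$. First I will show that $B:=[P_\chi E_k,h]_r$ lies in $S_K(\chi\omega_h)$. By \eqref{eq:projector-bracket}, $B=P_{\chi\omega_h}[E_k,h]_r$. The bracket $[E_k,h]_r$ is cuspidal of weight $K$ because $h$ is cuspidal, and $P_{\chi\omega_h}$ preserves cuspidality. Hence $B\in S_K(\chi\omega_h)$.

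Next I will compute $\langle f,B\rangle$ for any normalized eigentuple $f\in S_K(\chi\omega_h)$. The projector $P_{\chi\omega_h}$ is orthogonal on $S_K$ and fixes $f$. Therefore
\[
 \langle f,B\rangle=\langle P_{\chi\omega_h}f,[E_k,h]_r\rangle=\langle f,[E_k,h]_r\rangle .
\]
By \eqref{eq:petersson-automorphic}, together with Corollary~\ref{cor:weight-two} when $k=2$, this value is nonzero. Here $K>\ell$ holds because $k\ge2$, so Proposition~\ref{prop:analytic-application} applies. Since $S_K(\chi\omega_h)\ne0$, it contains a normalized eigentuple, and the nonzero pairing already shows $B\ne0$.

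Suppose now that $B$ is a simultaneous Hecke eigentuple. The Hecke algebra is commutative and normal on $S_K(\chi\omega_h)$ (Subsection~\ref{subsec:dictionary}), so this space has an orthogonal basis of simultaneous eigentuples. Because $\dim S_K(\chi\omega_h)>1$, some basis element $f$ is orthogonal to $B$. I will rescale $f$ so that $c(\OF,f)=1$, using that a nonzero eigentuple $f\in S_K(\omega)$ has $c(\OF,f)\ne0$. This follows from \eqref{eq:hecke-coefficients}: the coefficients $c(\mathfrak a,f)$ are determined by $c(\OF,f)$ and the eigenvalues, so $c(\OF,f)=0$ would force every coefficient $c(\mathfrak a,f)$ to vanish and hence $f=0$. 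The normalized $f$ then satisfies $\langle f,B\rangle=0$, which contradicts the previous step.

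The main obstacle is choosing $f$ orthogonal to $B$ and normalizable, not merely orthogonal. This needs two inputs. The first is the nonvanishing of the leading coefficient of a nonzero eigentuple, handled as above through the Hecke recurrence \eqref{eq:hecke-coefficients}. The second is the orthogonal eigenbasis of the subspace $S_K(\chi\omega_h)$ specifically, rather than of all of $S_K$, which comes from normality of the Hecke algebra restricted to that subspace. Nothing further is needed, because the nonvanishing of $\langle f,[E_k,h]_r\rangle$ for every normalized eigentuple $f$ is already supplied by Theorem~\ref{thm:pairing}.
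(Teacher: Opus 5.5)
Your proposal is correct and follows the paper's proof essentially step for step. The projector identity \eqref{eq:projector-bracket} places the bracket in $S_K(\chi\omega_h)$, Theorem~A gives $\langle f,[P_\chi E_k,h]_r\rangle=\langle f,[E_k,h]_r\rangle\ne0$ for every normalized eigentuple $f$ there, and an eigentuple in that space orthogonal to the bracket gives the contradiction. Your extra justifications fill in points the paper leaves implicit: self-adjointness of $P_{\chi\omega_h}$ on $S_K$, and $c(\OF,f)\ne0$ via \eqref{eq:hecke-coefficients}. The same recurrence argument also yields the multiplicity one needed for ``some basis element is orthogonal to $B$''; alternatively, choose the eigenbasis to contain $B$.
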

\begin{proof}
By \eqref{eq:projector-bracket},
\[
 [P_\chi E_k,h]_r=P_{\chi\omega_h}[E_k,h]_r
 \in S_K(\chi\omega_h).
\]
For every normalized eigentuple $f\in S_K(\chi\omega_h)$, Theorem~\ref{thm:pairing} gives
\[
 \langle f,[P_\chi E_k,h]_r\rangle
 =\langle f,[E_k,h]_r\rangle
 =\CF
 \frac{L^S(k/2,\Pi(f)\times\Pi(h)^\vee)}{L_F(k,\chi)}\ne0.
\]
Thus the bracket is nonzero. If it were an eigentuple, the commuting normal Hecke algebra would provide a normalized eigentuple $f\in S_K(\chi\omega_h)$ orthogonal to it, contradicting the displayed formula.
\end{proof}

\begin{corollary}\label{cor:noneigenform}
Let $h\in S_\ell(\omega_h)$ be a normalized full-level cuspidal eigentuple. Put $K=k+\ell+2r$. If there is a narrow ideal class character $\chi$ such that $\dim S_K(\chi\omega_h)>1$, then $[E_k,h]_r$ is not a simultaneous Hecke eigentuple. In particular, if $h_F^+=1$ and $\dim S_K>1$, then $[E_k,h]_r$ is not a Hecke eigenform.
\end{corollary}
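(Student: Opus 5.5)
The plan is to reduce to Corollary~\ref{cor:projected-noneigenform} using the central-character projectors. Suppose, for contradiction, that $B:=[E_k,h]_r$ is a simultaneous Hecke eigentuple. Then there is a single global central character $\omega_B$ with $B\in S_K(\omega_B)$. Using \eqref{eq:central-decomposition} and \eqref{eq:projector-bracket}, I first write
\[
 B=\sum_{\chi}P_{\chi\omega_h}B=\sum_\chi[P_\chi E_k,h]_r ,
\]
where the sum runs over all narrow ideal class characters $\chi$. The summands lie in the mutually orthogonal subspaces $S_K(\chi\omega_h)$. By \eqref{eq:projector-hecke-commute}, each $P_{\chi\omega_h}B$ is either zero or a simultaneous Hecke eigentuple with the same eigenvalues as $B$. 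Since $B\in S_K(\omega_B)$, only the component with $\chi\omega_h=\omega_B$ can be nonzero.

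Next, fix $\chi$ with $\dim S_K(\chi\omega_h)>1$, as in the hypothesis. Corollary~\ref{cor:projected-noneigenform} shows that $[P_\chi E_k,h]_r=P_{\chi\omega_h}B$ is nonzero. It follows that $\omega_B=\chi\omega_h$ and $B=P_{\chi\omega_h}B=[P_\chi E_k,h]_r$. The same corollary says that this bracket is not a simultaneous Hecke eigentuple, which contradicts the assumption. This settles the first assertion.

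For the final statement, take $h_F^+=1$. Then only the trivial character occurs, $\omega_h=1$, $P_1$ is the identity, and $S_K(1)=S_K$. The hypothesis becomes $\dim S_K>1$, and a Hecke eigentuple is just a Hecke eigenform, so the claim follows directly.

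The only step that needs care is checking that a simultaneous eigentuple really lies in a single $S_K(\omega)$. This is built into the definition used in the paper, since eigentuples are taken inside some $M_m(\omega_g)$. If one preferred the weaker notion of being an eigenvector merely in the full tuple space, I would argue differently. Because the projections $P_\chi$ commute with every $T_v$, the components $P_{\chi'\omega_h}B$ would all be eigenvectors with the same eigenvalues. By the nonvanishing in Theorem~\ref{thm:pairing}, the component $P_{\chi\omega_h}B$ is nonzero, and it is an eigenvector inside $S_K(\chi\omega_h)$ with $\dim>1$. This is precisely what Corollary~\ref{cor:projected-noneigenform} excludes.
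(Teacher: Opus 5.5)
Your proposal is correct and is essentially the paper's proof: project $[E_k,h]_r$ by $P_{\chi\omega_h}$, use \eqref{eq:projector-bracket} and Corollary~\ref{cor:projected-noneigenform} to get a nonzero projection, and use that $P_{\chi\omega_h}$ commutes with every $T_v$ to contradict that corollary. Your extra step, identifying $B$ with $[P_\chi E_k,h]_r$ through its single central character $\omega_B$, is valid but not needed; your closing paragraph is exactly the argument the paper gives.
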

\begin{proof}
Suppose that $[E_k,h]_r$ were a simultaneous Hecke eigentuple. Choose $\chi$ with $\dim S_K(\chi\omega_h)>1$. By \eqref{eq:projector-bracket},
\[
 [P_\chi E_k,h]_r=P_{\chi\omega_h}[E_k,h]_r.
\]
The left-hand side is nonzero by Corollary~\ref{cor:projected-noneigenform}; since $P_{\chi\omega_h}$ commutes with every $T_v$, it would therefore be a simultaneous Hecke eigentuple, contradicting the same corollary. If $h_F^+=1$, the only narrow ideal class character is trivial and $S_K(\omega_h)=S_K$, which gives the final assertion.
\end{proof}

When $h_F^+=1$, taking $r=0$ in Corollary~\ref{cor:noneigenform} gives $\dim S_{k+\ell}>1\Longrightarrow \Eeig_kh$ is not an eigenform.

\begin{proposition}\label{prop:HQZ-reductions}
The conclusions of \cite[Propositions~4.12 and~4.13]{HQZ} remain valid without GRH. Thus, over a real quadratic field of narrow class number one and discriminant $D_F>5$, no full-level product identity $P=\Eeig_2h$ exists with $h$ cuspidal.
\end{proposition}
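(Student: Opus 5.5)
The plan is to go back into the proofs of \cite[Propositions~4.12 and~4.13]{HQZ}, isolate the single step where GRH is invoked, and replace it by Corollary~\ref{cor:noneigenform} with $k=2$, $r=0$. In \cite{HQZ}, GRH enters only to guarantee that $\langle f,E_2h\rangle\neq0$ for every normalized eigenform $f\in S_{\ell+2}$. That nonvanishing is exactly what produces the Zhang--Zhou orthogonality contradiction whenever $\dim S_{\ell+2}>1$.

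Corollary~\ref{cor:weight-two} now gives this nonvanishing unconditionally, because $L^S(1,\Pi(f)\times\Pi(h)^\vee)\neq0$. With $h_F^+=1$, $\omega_f=\omega_h=1$ and $E_2^{\mathrm{eig}}=2^{-d}\zeta_F(-1)E_2$, where $\zeta_F(-1)\neq0$ for real quadratic $F$ by Siegel. Hence Corollary~\ref{cor:noneigenform} shows that $E_2^{\mathrm{eig}}h$ is not an eigenform whenever $\dim S_{\ell+2}>1$.

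The proof then reduces to the case split used in \cite{HQZ}. Suppose $P=cE_2^{\mathrm{eig}}h$ with $h\in S_\ell$ a normalized cusp eigenform. Since $h$ is cuspidal, $P$ is cuspidal and lies in $S_{\ell+2}$. If $\dim S_{\ell+2}>1$, the previous paragraph excludes this. So we may assume $\dim S_{\ell+2}=1$ while $S_\ell\neq0$.

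For this remaining case I would invoke the unconditional dimension-formula inputs of \cite[Propositions~4.12 and~4.13]{HQZ}. These come from the Shimizu dimension formula and coefficient comparisons such as \eqref{eq:hecke-square} at a small prime. Over real quadratic fields with $h_F^+=1$ and $D_F>5$, they show that $\dim S_{\ell+2}\le1$ with $S_\ell\neq0$ either cannot occur or contradicts the Hecke relation.

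The main obstacle is bookkeeping rather than new mathematics. One has to check carefully that the arguments in \cite{HQZ} use GRH only through the nonvanishing of $L(1,\mathrm{Sym}$-type$)$/Rankin--Selberg values at the edge $w=1$. One also has to check that their Poincar\'e normalization of $E_2$ matches ours up to a nonzero constant, and that the surviving finitely many small cases were settled there by explicit computation independent of GRH.
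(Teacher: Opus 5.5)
Your central step matches the paper's proof. The only GRH-dependent ingredient of \cite[Propositions~4.12 and~4.13]{HQZ} is the weight-two non-eigenform criterion \cite[Theorem~3]{HQZ}, and Corollary~\ref{cor:noneigenform} with $k=2$, $r=0$ replaces it unconditionally. You apply the corollary first to every $(F,\ell)$ with $\dim S_{\ell+2}>1$, whereas the paper applies it only to the finitely many triples surviving in \cite[Tables~2 and~3]{HQZ}. That reordering is harmless, and it correctly isolates the residual configurations with $\dim S_{\ell+2}=1$ and $S_\ell\neq0$.

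The gap is in how you dispose of that residual case, which is the only real content beyond the substitution. If $\dim S_{\ell+2}=1$ and $S_\ell\neq0$, then $\Eeig_2h$ is a nonzero element of a one-dimensional Hecke-stable space. It is therefore automatically a multiple of the normalized eigenform, so such a configuration \emph{is} a product identity. Nothing about the Hecke behaviour of $\Eeig_2h$ itself can rule it out; you must show the configuration does not exist.

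The paper does this concretely in three steps. First, the unconditional coefficient bounds of \cite{HQZ}, which refute an assumed identity numerically, reduce everything to the finite Tables~2 and~3. Second, the corollary removes every triple there whose target dimension exceeds one. Third, what remains is exactly $(k,\ell,D_F)=(2,2,13)$ in the inert case and $(2,2,8)$ in the non-inert case. The paper closes these two triples not by a Hecke-relation argument but by the dimension input $S_2=0$ for $D_F=13$ and $D_F=8$, taken from Ishikawa's tables \cite{Ishikawa}.

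Your sketch instead defers to unspecified ``dimension-formula inputs'' of \cite{HQZ} and a possible ``contradiction with the Hecke relation''. To complete the argument, you need to name these two surviving triples and supply the vanishing of $S_2$ for them.
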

\begin{proof}
In \cite{HQZ}, the only step in these two propositions that is not already unconditional is the use of \cite[Theorem~3]{HQZ} when the Eisenstein weight is $2$. Corollary~\ref{cor:noneigenform} gives the required dimension obstruction without GRH.

In the inert case, the coefficient bounds and the reductions leading to \cite[Table~2]{HQZ} are unconditional. For every remaining triple with target dimension greater than one, replace the invocation of \cite[Theorem~3]{HQZ} by Corollary~\ref{cor:noneigenform}. The only triple left is $(k,\ell,D_F)=(2,2,13)$, which would require a nonzero form in $S_2$; Ishikawa's table gives $S_2=0$ for $D_F=13$ \cite[p.~85]{Ishikawa}.

In the non-inert case, the bounds and \cite[Table~3]{HQZ} are again unconditional. When the Eisenstein weight is $4$, the target spaces in the remaining range have dimension greater than one, and Corollary~\ref{cor:noneigenform} gives the required exclusion. When the Eisenstein weight is $2$, only $(k,\ell,D_F)=(2,2,8)$ remains, but Ishikawa gives $S_2=0$ for $D_F=8$ \cite[p.~87]{Ishikawa}. Hence both propositions hold without GRH.
\end{proof}

\begin{theorem}[Theorem B]\label{thm:classification}
Among normalized full-level Hilbert Hecke eigenforms of even parallel weights at least two over real quadratic fields of narrow class number one, the only product identities are
\begin{equation}\label{eq:classification}
 \begin{gathered}
 F=\Q(\sqrt5),\qquad \Eeig_4=60(\Eeig_2)^2,\qquad h_8=120\Eeig_2h_6,\\
 c(\OF,\Eeig_k)=1.
 \end{gathered}
\end{equation}
Here $h_6$ and $h_8$ are the unique normalized cusp eigenforms of weights $6$ and $8$ over $\Q(\sqrt5)$.
\end{theorem}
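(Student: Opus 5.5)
The proof will follow the case structure of Hao--Qin--Zhou \HQZtheoremRef, whose only conditional input was the weight-two non-eigenform criterion. That input is now supplied unconditionally by Corollary~\ref{cor:noneigenform} and Proposition~\ref{prop:HQZ-reductions}. Suppose $P=cGH$ is a product identity over a real quadratic field $F$ with $h_F^+=1$, with $G$ of weight $k$, $H$ of weight $\ell$, and $P$ of weight $k+\ell$, all even and at least $2$. Since the bracket satisfies $[g,h]_0=gh$, we may interchange the factors freely. I would split into three cases according to how many of $G,H$ are cuspidal.

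\textbf{Case 1: both factors cuspidal.} Then $GH$ vanishes to second order at the cusp, in the sense that its first Fourier coefficient $c(\OF,GH)=0$. But $P$ is normalized with $c(\OF,P)=1$. So no identity exists, and this step is elementary.

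\textbf{Case 2: exactly one factor cuspidal}, say $G=\Eeig_k$ and $H=h$. Then $P$ is a cusp eigenform of weight $k+\ell$. If $k\ge4$, the unconditional Zhang--Zhou criterion (Corollary~\ref{cor:noneigenform} with $r=0$) excludes every case with $\dim S_{k+\ell}>1$. For $k=2$, the same corollary now applies unconditionally. This is exactly the content of Proposition~\ref{prop:HQZ-reductions}: for $D_F>5$ no identity $P=\Eeig_2h$ exists. What remains is the finite list of pairs with $\dim S_{k+\ell}=1$, together with the field $\Q(\sqrt5)$.
\begin{itemize}
\item For the one-dimensional targets I would follow \cite{HQZ}. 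Coefficient bounds (the Ramanujan bound for $h$ versus the divisor-sum growth of the Eisenstein factor) restrict $(k,\ell,D_F)$ to a finite table.
\item Each surviving entry is then eliminated or confirmed by comparing a few Fourier coefficients against the Hecke relation \eqref{eq:hecke-square}. This leaves only $h_8=120\Eeig_2h_6$ over $\Q(\sqrt5)$.
\end{itemize}

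\textbf{Case 3: neither factor cuspidal.} Then $G=\Eeig_k$, $H=\Eeig_\ell$, and $P$ has nonzero constant term, so it must be a multiple of $\Eeig_{k+\ell}$ plus possibly a cusp part. This case is unconditional already in Joshi--Zhang \cite[Theorem~7.4]{JZ} and \cite{HQZ}. Comparing the coefficients at $\OF$ and at small prime ideals with divisor sums $\sigma_{k-1}$ forces $D_F=5$ and $k=\ell=2$, giving $\Eeig_4=60(\Eeig_2)^2$. I would cite these results rather than redo them. One check is needed: the normalization $\Eeig_2=2^{-d}\zeta_F(-1)E_2$ must agree with theirs, which holds since $c(\OF,\Eeig_2)=1$.

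\textbf{Verification and main obstacle.} Finally, I would verify that the two identities over $\Q(\sqrt5)$ actually hold. This uses $\dim M_4=1$ and $\dim S_8=1$ there, read off from standard dimension tables such as Ishikawa \cite{Ishikawa}, plus matching the constants $60$ and $120$ on the first coefficient. The main obstacle is bookkeeping rather than new mathematics. One has to confirm that every step of \cite{HQZ} other than the weight-two appeal to their conditional Theorem~3 is unconditional. In particular, the bounds producing Tables~2 and~3 must not secretly use GRH, and the residual low-dimensional cases must be covered by Ishikawa's table. Checking this claim against \cite{HQZ} is the step I would scrutinize first.
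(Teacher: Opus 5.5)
Your proposal is correct and follows essentially the paper's route. Cusp--cusp products are excluded by $c(\OF,GH)=0$; the paper justifies this by noting that $1=\xi+\eta$ with $\xi,\eta\gg0$ in $\OF$ would force $0<N(\xi)<1$. The Eisenstein--Eisenstein and weight-$\ge4$ Eisenstein--cusp cases are the unconditional parts of \cite[Theorem~1]{HQZ}, the weight-two Eisenstein--cusp case with $D_F>5$ is Proposition~\ref{prop:HQZ-reductions}, and the list over $\Q(\sqrt5)$ comes from \cite[Theorem~7.4]{JZ}.

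One correction: entries with a one-dimensional target cannot be eliminated by comparing Fourier coefficients with \eqref{eq:hecke-square}. A nonzero $\Eeig_kh$ in a one-dimensional $S_{k+\ell}$ is automatically an eigenform. Such entries are instead disposed of by dimension data, for example $S_2=0$ for $D_F=8,13$ from \cite{Ishikawa}, as in the proof of Proposition~\ref{prop:HQZ-reductions}.
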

\begin{proof}
The Eisenstein--Eisenstein case and the Eisenstein--cusp case with
Eisenstein weight at least four are unconditional by
\cite[Theorem~1]{HQZ}.

A cusp--cusp product is impossible. Indeed, if $g$ and $h$ are cusp
forms, then $c(\OF,gh)=0$. A contribution to this coefficient would
require
\[
 1=\xi+\eta,\qquad
 \xi,\eta\in\OF,\qquad
 \xi,\eta\gg0.
\]
Then $0<\tau(\xi)<1$ at every real embedding $\tau$, and hence
$0<N(\xi)<1$, contradicting $N(\xi)\in\Z_{>0}$. Thus a cusp--cusp
product cannot equal a normalized eigenform, whose unit-ideal
coefficient is $1$.

It remains only to consider the weight-two Eisenstein--cusp case.
Proposition~\ref{prop:HQZ-reductions} excludes it over every real
quadratic field with $D_F>5$. Hence no further product identity occurs
there. Over $\Q(\sqrt5)$, Joshi and Y.~Zhang proved that the two
identities in \eqref{eq:classification} are the complete list
\cite[Theorem~7.4]{JZ}.
\end{proof}

\section*{Acknowledgments}
The author is grateful to Haowu Wang for his valuable guidance and detailed suggestions throughout the preparation of this manuscript.

\section*{Declaration of AI use}
The author used ChatGPT (OpenAI) for assistance with language editing and for discussing and checking mathematical arguments during the preparation of this manuscript. All mathematical statements, arguments, and proofs included in the final version were independently verified by the author, who takes full responsibility for the content of the article.

\end{document}